\title{Sparse Induced Subgraphs of Large Treewidth}
\titlerunning{Sparse Induced Subgraphs of Large Treewidth}
\author{\'{E}douard Bonnet}{Univ Lyon, CNRS, ENS de Lyon, Université Claude Bernard Lyon 1, LIP UMR5668, France \and \url{http://perso.ens-lyon.fr/edouard.bonnet/}}{edouard.bonnet@ens-lyon.fr}{https://orcid.org/0000-0002-1653-5822}{}
\authorrunning{\'E. Bonnet}
\newtheorem*{rep@theorem}{\rep@title}
\newcommand{\newreptheorem}[2]{%
\newenvironment{rep#1}[1]{%
 \def\rep@title{#2 \ref{##1}}%
 \begin{rep@theorem}}%
 {\end{rep@theorem}}}
\newcommand{\card}[1]{|{#1}|}
\newcommand\tw{\text{tw}}
\newcommand\bn{\text{bn}}
\begin{document}

\maketitle

\begin{abstract}
  Motivated by an induced counterpart of treewidth sparsifiers (i.e., sparse subgraphs keeping the treewidth large) provided by the celebrated Grid Minor theorem of Robertson and Seymour [JCTB '86] or by a~classic result of Chekuri and Chuzhoy [SODA '15], we show that for any natural numbers $t$ and $w$, and real $\varepsilon > 0$, there is an integer $W := W(t,w,\varepsilon)$ such that every graph with treewidth at least $W$ and no $K_{t,t}$ subgraph admits a 2-connected $n$-vertex induced subgraph with treewidth at~least $w$ and at~most $(1+\varepsilon)n$ edges.
  The induced subgraph is either a subdivided wall, or its line graph, or a spanning supergraph of a subdivided biclique.
  This in particular extends a result of Weißauer [JCTB '19] that graphs of large treewidth have a large biclique subgraph or a long induced cycle.
\end{abstract}

\section{Introduction}\label{sec:intro}

The celebrated Grid Minor theorem~\cite{RobertsonS86} implies that graphs with high treewidth admit large subdivided walls as subgraphs.
Thus, in particular, they contain subcubic subgraphs of large treewidth.
A~classic result by Chekuri and Chuzhoy~\cite{Chekuri15} qualitatively improves the latter fact:
Every graph of treewidth at least $k$ contains a~subcubic subgraph with treewidth $k/\log^{\Theta(1)} k$.
These results are sometimes called ``treewidth sparsifiers'' since they provide a~sparse \emph{subgraph} retaining large treewidth.
What about sparse \emph{induced subgraphs} retaining large treewidth?

As cliques and bicliques have large treewidth but no induced subgraphs of large treewidth that are sparse, for any standard definition of \emph{sparse}, we have to exclude them as induced subgraphs.
Equivalently, we have to exclude large bicliques as subgraphs.
Even then, we cannot ask for as much ``sparsification'' as one gets with subgraphs.
There are indeed several constructions of graphs with large treewidth and no large biclique subgraph but yet no induced subgraph of both large treewidth and small maximum degree:
for instance, the so-called \emph{ttf-layered-wheels} of Sintiari and Trotignon~\cite{SintiariT21} (where \emph{ttf} stands for triangle and theta-free), or a~grid where every ``column'' is replaced by a star, due to Pohoaţă~\cite{Pohoata14} and to Davies~\cite{Davies22}, or a~construction of Bonamy et al.~\cite{Bonamy23} without two mutually induced cycles nor $K_{2,3}$ subgraph.
Furthermore, all those constructions can be made of arbitrary large girth.

While none of these families were primarily designed to avoid induced subgraphs of large treewidth and small maximum degree, it is in hindsight necessary that they do as they all exclude a~planar induced minor: $K_{2,3}$ (the \emph{theta graph}), the $5 \times 5$ grid, and $2K_3$ (two disjoint triangles), respectively.
Indeed, by a~result of Korhonen~\cite{Korhonen23}, we now know that such families (avoiding a~fixed planar graph as an induced minor) have treewidth at~most some function of their maximum degree.

Which notion of sparsity should one then pick?
Hajebi~\cite{Hajebi22,Hajebi24} suggested to go with degeneracy and asked whether weakly sparse (i.e., avoiding a~biclique subgraph) graphs of high treewidth always admit 2-degenerate induced subgraphs of large treewidth.
This is a~sensible question as a~result of K\"uhn and Osthus~\cite{Kuhn04} implies that for any natural numbers $t$ and $w$, there is some $d := d(t,w)$ such that every graph without $K_{t,t}$ subgraph is either $d$-degenerate itself or admits a~2-degenerate induced subgraph of treewidth~$w$. 
However, Hajebi's suggestion was very recently ruled out by Chudnovsky and Trotignon~\cite{Chudnovsky24}: For every natural number~$c$, there are graphs without $K_{2,2}$ as an induced subgraph, of clique number $c+1$, and of arbitrarily large treewidth such that all their $K_c$-free induced subgraphs have treewidth at most some (linear) function of~$c$.

In this paper, we go in a~somewhat different direction, and set ourselves to find an induced subgraph of large treewidth but as small average degree as possible.
Of course, one may try and ``cheat'' by decreasing the average degree of any induced subgraph $H$ of large treewidth by adding the vertices of an independent set (non-adjacent to~$H$).
To prevent that, we require the induced subgraph to be 2-connected.
This is as far as we can go, since the 1-subdivision of a~large clique has no 3-connected induced subgraphs.

Formally we show the following theorem.
We will actually prove the stronger~\cref{thm:spec-main}.

\begin{theorem}\label{thm:main}
  For any natural numbers $t$ and $w$, and real $\varepsilon > 0$, there is an integer $W := W(t,w,\varepsilon)$ such that every graph with treewidth at least $W$ and no $K_{t,t}$ subgraph admits a~2-connected $n$-vertex induced subgraph with treewidth at~least $w$ and at~most $(1+\varepsilon)n$ edges.
\end{theorem}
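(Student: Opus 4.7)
The plan is to extract from $G$ a large subdivided wall subgraph $W'$ via the Grid Minor theorem (or the Chekuri–Chuzhoy subcubic sparsifier), choosing $W'$ both wide enough in its brick dimensions (so that $W'$ alone has treewidth at least $w$ with margin) and long in its subdivision paths: if every edge of the underlying wall is subdivided at least $\ell = \ell(\varepsilon)$ times, then $W'$ on its own has at most $(1 + \varepsilon/2)\,|V(W')|$ edges. The goal is then to find a sub-wall which is either induced in $G$, or whose chord structure (edges of $G[V(W')]$ outside $E(W')$) fits one of the other two templates advertised in the abstract.

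To tame chords, I would apply a pigeonhole/Ramsey-type reduction to $W'$, iteratively restricting to a sub-wall $W'' \subseteq W'$ on which the pattern of chords has been regularised: first by type (chords inside a single subdivision path, between subdivision paths of a common brick, between far-apart paths, and between branch vertices), and then within each type by uniformity along the horizontal and vertical directions of the wall. The $K_{t,t}$-free hypothesis, combined with Kővári–Sós–Turán-style counting and the Kühn–Osthus bound already cited in the introduction, restricts which uniform patterns can survive in a sufficiently large sub-wall.

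A case analysis on the surviving pattern should then produce one of the three outcomes. If the regularised chord density is small, a local cleaning (deleting a small fraction of over-chorded vertices while preserving 2-connectedness) yields a subdivided wall as an induced subgraph. If chords consistently shortcut between adjacent subdivision paths in a way that encodes the ``share an endpoint'' relation of the underlying wall's edges, the induced structure should be the line graph of a subdivided wall. Otherwise, chords concentrate so that two small hub-sets $A, B \subseteq V(W'')$ are joined by many internally disjoint induced paths, giving a spanning supergraph of a subdivided biclique; because $|A \cup B|$ is bounded while the long subdivision paths contribute most vertices, edges added inside the hubs are absorbed in the $\varepsilon$-slack. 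The main obstacle is precisely this last case: Kővári–Sós–Turán gives only an $O(n^{2-1/t})$ upper bound on edges in $K_{t,t}$-free graphs, far too weak to secure the $(1+\varepsilon)n$ bound directly, so one must use the geometry of the wall to push all ``excess'' edges into the small hub region, or argue that any chord pattern escaping the hubs would produce a forbidden $K_{t,t}$. Maintaining 2-connectedness throughout, especially after the vertex deletions used for cleaning, is a delicate secondary obstacle that must be handled inside each of the three templates.
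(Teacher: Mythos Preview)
Your plan diverges from the paper's at the very first move, and the central step---``pigeonhole/Ramsey-type reduction'' to regularise the chord pattern on a subdivided-wall subgraph---is where the real gap lies. You never say what the regularised patterns are, nor why the $K_{t,t}$-free hypothesis collapses them into exactly the three templates. In particular, your third outcome (a spanning supergraph of a subdivided biclique) does not emerge naturally from chord analysis on a wall: nothing you wrote explains why chords should ``concentrate so that two small hub-sets $A,B$'' appear. The constructions mentioned in the introduction (Pohoa\c{t}\u{a}/Davies, Sintiari--Trotignon, Bonamy et al.) are precisely examples where the chord structure on any wall subgraph is adversarial and does \emph{not} clean up to an induced wall or its line graph; extracting a subdivided biclique from them requires an argument you have not supplied. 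The acknowledged K\H{o}v\'ari--S\'os--Tur\'an obstacle is genuine and your suggestion to ``push all excess edges into the small hub region'' is a restatement of the goal, not a mechanism.

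The paper proceeds differently from the start. It first proves a structural dichotomy (\cref{lem:clique-subdivision-intro}): every graph of large enough treewidth contains a large grid as an \emph{induced} minor, or a large clique as a topological minor. This dichotomy is non-trivial; it combines the Grohe--Marx structure theorem for topological-minor-free graphs with Korhonen's bounded-degree grid theorem and the Fomin--Golovach--Thilikos result for minor-free classes. The grid-induced-minor branch is then immediate (\cref{lem:sparse-grid}). In the clique-subdivision branch, the $K_{t,t}$-free hypothesis plus Dvo\v{r}\'ak's strengthening of K\"uhn--Osthus yields bounded expansion, which forces the direct paths of the subdivision to be long and the degeneracy to be bounded (\cref{lem:long-clique-subd-bd-degen}). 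One then passes to a $K_{s,s}$ subdivision and applies a random-balanced-partition averaging argument (\cref{lem:dedensifying}): because any extra edge is pinned down by at least three branch vertices, two of which lie on the same side of the bipartition, the expected fraction of extra edges surviving in a sub-biclique drops by a factor equal to the number of parts, while every subdivision vertex survives in exactly one sub-biclique. This is the step that converts ``degeneracy at most $d$'' into ``at most $(1+\varepsilon)n$ edges,'' and nothing in your plan plays its role.
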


A~first step toward~\cref{thm:main} is the following lemma. 

\begin{lemma}\label{lem:clique-subdivision-intro}
  For any natural numbers $k$ and $s$, there is an integer $W := W(k,s)$ such that every graph of treewidth at~least $W$ either admits a~subdivision of the $s$-clique as a~subgraph or the $k \times k$ grid as an induced minor. 
\end{lemma}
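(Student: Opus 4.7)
The plan is to reduce to bounded degeneracy via the Bollob\'as--Thomason theorem, extract a large wall subdivision via the Chekuri--Chuzhoy treewidth sparsifier, and clean it into an induced sub-wall that carries the $k\times k$ grid as induced minor.

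Assume $G$ has $\tw(G)\geq W$ (for $W$ a sufficiently large function of $k$ and $s$, to be specified) and contains no $K_s$-subdivision as a subgraph; the goal is to exhibit the $k\times k$ grid as an induced minor of $G$. By the Bollob\'as--Thomason theorem, the absence of any $K_s$-subdivision implies that $G$ is $d$-degenerate for some $d=O(s^2)$, so in particular every subgraph of $G$ has average degree at most $2d$.

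Apply the Chekuri--Chuzhoy treewidth sparsifier to extract from $G$ a subgraph $H$ that is a subdivision of the $N\times N$ wall, where $N$ grows as $W/\mathrm{polylog}(W)$ and (by pre-extracting from an even larger wall and discarding most of it) each subdivision path of $H$ has length at least some prescribed $L$. Call \emph{chords} the edges of $G$ whose both endpoints lie in $V(H)$ but which do not belong to $H$; the degeneracy bound gives at most $d\cdot|V(H)|=O(dN^2L)$ chords in total.

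The technical heart of the argument is to find inside $H$ a chord-free sub-wall of size $2k\times 2k$. Consider the auxiliary \emph{chord graph} whose vertices are the subdivision paths of $H$ and whose edges connect pairs of paths joined by at least one chord; this graph has $\Theta(N^2)$ vertices and $O(dN^2L)$ edges, hence average degree $O(dL)$. A random selection, combined with a union bound over the many possible $2k\times 2k$ sub-walls inside $H$, produces one whose paths pairwise avoid all chord-graph edges, provided $N$ is large enough in terms of $k$, $d$, and $L$. The associated branch sets -- each wall branch vertex augmented with the halves of its three incident subdivision paths -- then form an induced-minor representation of the $2k\times 2k$ wall in $G$: chords \emph{within} a single branch set do not affect induced-minor adjacencies, and chords \emph{between} distinct branch sets have been avoided by construction. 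Contracting the subdivision paths exhibits the $2k\times 2k$ wall as induced minor of $G$, which in turn contains the $k\times k$ grid as induced minor.

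The main obstacle is the sub-wall extraction step: one must simultaneously avoid all $O(dN^2L)$ chord-graph edges while respecting the rigid sub-wall structure. Taking $N\gg k\sqrt{dL}$ is sufficient, as a union bound over chord-graph edges against the abundance of sub-walls leaves room for a valid selection. Propagating the quantitative dependencies ($d=O(s^2)$, $N$ as a function of $k$, $d$, $L$, and the parameters of Chekuri--Chuzhoy relating $N$, $L$ to $W$) yields the claimed integer $W(k,s)$.
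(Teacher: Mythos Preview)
Your approach differs from the paper's, which invokes the Grohe--Marx structure theorem for excluded topological minors to obtain a tree-decomposition of bounded adhesion whose pieces (in an induced-minor-preserving variant of the torso) each have essentially bounded degree or exclude a fixed minor, and then applies Korhonen's theorem and the Fomin--Golovach--Thilikos theorem as black boxes to those pieces. A~direct cleaning argument would be more elementary, but your sketch has genuine gaps.

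The chord-count estimate $d\cdot|V(H)|=O(dN^2L)$ requires $|V(H)|=O(N^2L)$, yet you only arrange that each subdivision path has length \emph{at least} $L$, which yields the opposite inequality $|V(H)|\geqslant\Omega(N^2L)$. Nothing in Chekuri--Chuzhoy or the grid minor theorem caps path lengths from above, and with only degeneracy (not maximum degree) bounded there is no evident way to do so; a~few very long paths can then carry enough chords to make the chord graph dense. Even granting the chord bound, the extraction step is not justified: a~Tur\'an-type bound gives large unstructured independent sets in the chord graph, not ones realising the rigid row/column pattern of a sub-wall, and if the sub-wall is taken as a topological minor of $W_N$ then its subdivision paths are concatenations of many paths of $H$, so the relevant independence condition is not the one your chord graph encodes. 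Making this cleaning step precise in the degenerate (rather than bounded-degree) setting appears to require an argument comparable in difficulty to Korhonen's theorem itself, which the paper's route simply imports.
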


\Cref{lem:clique-subdivision-intro} was previously proven under a~different formulation by Abrishami et al.~\cite[Theorem 6.5]{istd7}.
The proof of~\cref{lem:clique-subdivision-intro} mainly consists of combining results by
\begin{compactitem}
\item Korhonen~\cite{Korhonen23}, essentially showing \cref{lem:clique-subdivision-intro} with the subdivision of the $s$-clique or \emph{$K_s$~topological minor} replaced by \emph{$s$~vertices of degree at~least $s$},
\item Golovach, Fomin, and Thilikos~\cite{FominGT11}, similarly showing \cref{lem:clique-subdivision-intro} with \emph{$K_s$ topological minor} replaced by \emph{$K_s$ minor}, and
\item Grohe and Marx \cite{GroheM15} decomposing graphs excluding a~large topological minor in a~tree-decomposition of bounded adhesion whose every torso avoids either $s$ vertices of degree at least $s$ or a~$K_s$ minor.
\end{compactitem}
We give an alternative proof in our different language of induced minors and with our notations.
The only technicality in obtaining \cref{lem:clique-subdivision-intro} is that torsos do not preserve induced subgraphs nor induced minors.
We overcome it in a~different way than Abrishami et al.~\cite{istd7}, who stick to the torso, and thus need to show how to transform large induced grids in the torso into large induced grids in the original graph. 
Instead, we work with a~moral equivalent of the torso that is also an induced minor of the original graph.
On a~conceptual level, we find it simpler than the previous proof since we spare ourselves the ``transformation step.''
Our task is simply to check that Grohe and Marx's structure theorem~\cite{GroheM15} still works with our torso variant, an observation of independent interest.

As every planar graph is an induced minor of a~sufficiently large grid, the following corollary is a~slightly more compact formulation of~\cref{lem:clique-subdivision-intro}. 
Like~\cref{lem:clique-subdivision-intro}, it should be credited to Abrishami et al.~\cite{istd7}.

\begin{corollary}\label{cor:main}
  Any class of unbounded treewidth contains every planar graph as an induced minor or every graph as a~topological minor.
\end{corollary}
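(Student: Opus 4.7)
The strategy is to derive \cref{cor:main} directly from \cref{lem:clique-subdivision-intro} by a short contrapositive argument, using only two elementary facts: (i) the complete graph $K_s$ contains every graph on at most $s$ vertices as a subgraph, hence a subdivision of $K_s$ as a subgraph contains every such graph as a topological minor; and (ii) as recorded just before the statement, every planar graph $P$ is an induced minor of the $k \times k$ grid for some $k = k(P)$.

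First I would fix a class $\mathcal{C}$ of unbounded treewidth and assume, for contradiction with the second disjunct, that there is some graph $H_0$ that is \emph{not} a topological minor of any member of $\mathcal{C}$. Setting $s := |V(H_0)|$ and invoking fact (i), no graph in $\mathcal{C}$ can contain a subdivision of $K_s$ as a subgraph, because such a subgraph would witness $H_0$ as a topological minor of the host graph. The task then reduces to showing that every planar graph occurs as an induced minor of some member of $\mathcal{C}$.

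For this, fix an arbitrary planar graph $P$ and, by (ii), choose $k$ such that $P$ is an induced minor of the $k \times k$ grid. Apply \cref{lem:clique-subdivision-intro} with parameters $k$ and $s$ to obtain $W := W(k,s)$, and pick $G \in \mathcal{C}$ with $\tw(G) \geq W$, which exists since $\mathcal{C}$ has unbounded treewidth. Of the two conclusions offered by \cref{lem:clique-subdivision-intro}, the first is ruled out by our choice of $s$, so $G$ must contain the $k \times k$ grid as an induced minor; by transitivity of the induced-minor relation, $G$ also contains $P$. As $P$ was arbitrary, $\mathcal{C}$ contains every planar graph as an induced minor, establishing the stated dichotomy.

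There is essentially no obstacle here, and I do not expect any hidden difficulty: once \cref{lem:clique-subdivision-intro} is in hand, the corollary is just a reformulation through facts (i) and (ii). The only mild point to keep straight is that the grid size in (ii) depends on the target planar graph $P$ alone, so that we are free to feed $k = k(P)$ and any chosen $s$ into \cref{lem:clique-subdivision-intro} independently of the unknown graph $G$ to be found in $\mathcal{C}$.
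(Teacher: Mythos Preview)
Your proposal is correct and matches the paper's approach: the paper does not give a formal proof but simply notes that the corollary is ``a~slightly more compact formulation of~\cref{lem:clique-subdivision-intro}'' since every planar graph is an induced minor of a~sufficiently large grid, which is exactly the argument you spell out via facts (i) and (ii).
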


In passing, let us mention a~direct algorithmic application of~\cref{lem:clique-subdivision-intro}.
Let \textsc{\mbox{$H$-Induced} Minor Containment} be the problem of deciding if an input graph $G$ contains the fixed graph $H$ as an induced minor.
Korhonen and Lokshtanov~\cite{KorhonenL23} recently showed, among other things, that this problem can be NP-complete even when $H$ is a~tree.
In stark contrast, for every planar graph~$H$, the problem admits a~linear-time algorithm  on topological-minor-free classes.

\begin{corollary}\label{thm:alg-im-containment}
  Let $H$ be a~$q$-vertex planar graph, and $\mathcal C$ be a~graph class excluding at~least one fixed graph as a~topological minor. 
  There is a~computable function $f$ such that \textsc{$H$-Induced Minor Containment} can be solved in time $f(q) \cdot n$ on $n$-vertex graphs of~$\mathcal C$.
\end{corollary}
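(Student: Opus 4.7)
The plan is a win/win dichotomy based on~\cref{lem:clique-subdivision-intro}. Let $K$ be a fixed graph excluded by $\mathcal C$ as a topological minor, and set $s := |V(K)|$. Since every $s$-vertex graph is a topological minor of $K_s$, no member of $\mathcal C$ can contain a subdivision of $K_s$ as a subgraph; indeed, such a subdivision would already witness $K$ as a topological minor by keeping only the edge-paths corresponding to the edges of $K$. Next, I would choose $k = k(q)$ large enough that every $q$-vertex planar graph is an induced minor of the $k \times k$ grid; this is a standard ``thickening'' embedding, where one draws $H$ in a grid, replaces each vertex with a large disjoint tile, and routes edges along pairwise disjoint paths so that no unintended adjacencies arise between tiles. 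Let $W := W(k,s)$ be the constant from~\cref{lem:clique-subdivision-intro}.

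Given an input $G \in \mathcal C$, I would first run an FPT constant-factor treewidth approximation (for example, Korhonen's linear-time $2$-approximation, or Bodlaender's algorithm) to either certify $\tw(G) \ge W$ or output a tree-decomposition of width $O(W)$. In the first case,~\cref{lem:clique-subdivision-intro} applied to $G$ must yield the $k \times k$ grid as an induced minor of $G$, since the $K_s$-subdivision alternative is ruled out by the choice of $s$; because the induced-minor relation is transitive and $H$ is an induced minor of the $k \times k$ grid, we obtain $H$ as an induced minor of $G$ and safely answer YES. In the second case, $\tw(G) = O(W)$ is a constant depending only on $q$ and on~$\mathcal C$.

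For the bounded-treewidth branch, the property ``$G$ contains $H$ as an induced minor'' is easily expressible in $\mathrm{MSO}_2$: existentially guess vertex sets $V_1,\dots,V_q$, and assert that each $V_i$ is nonempty and induces a connected subgraph, that for every $ij \in E(H)$ there is an edge of $G$ from $V_i$ to $V_j$, and that for every non-edge $ij \notin E(H)$ no such edge exists. Courcelle's theorem then decides this sentence in time $f(q) \cdot n$ on a tree-decomposition of width $O(W(k(q),s))$, matching the claimed running time.

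No substantive obstacle is anticipated: both~\cref{lem:clique-subdivision-intro} and Courcelle's theorem are invoked as black boxes. The only mildly delicate points are bookkeeping the constant $s$ extracted from the excluded topological minor and checking the induced-minor embedding of $q$-vertex planar graphs into a sufficiently large grid, which is a routine drawing argument.
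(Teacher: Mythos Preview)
Your proposal is correct and follows essentially the same approach as the paper's own proof: the win/win dichotomy from \cref{lem:clique-subdivision-intro}, Korhonen's linear-time $2$-approximation for treewidth, and Courcelle's theorem with an MSO formula for induced-minor containment in the bounded-treewidth branch. One trivial slip: your MSO formula must also assert that the $V_i$ are pairwise disjoint (otherwise, e.g., a single-vertex graph would satisfy it for $H=2K_1$); the paper includes this clause explicitly.
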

\begin{proof}
  Let $s$ be such that no graph of $\mathcal C$ has a~$K_s$ topological minor, and $k=q^{O(1)}$ such that $H$ is an induced minor of the $k \times k$ grid.
  By~\cref{lem:clique-subdivision-intro} there is some $W := W(k,s)$ such that every graph of~$\mathcal C$ with treewidth at~least $W$ is a~positive instance.

  Let $G$ be the input graph.
  One first finds a~tree-decomposition of width $2\tw(G)+1$ in time $2^{O(\tw(G))} |V(G)|$~\cite{Korhonen21}, where $\tw(G)$ denotes the treewidth of~$G$.
  If the reported tree-decomposition has width at~least $2W+1$, we correctly conclude that $G$ admits $H$ as an induced minor.
  If instead it has width at~most $2W$, we solve \textsc{$H$-Induced Minor Containment} in time $g(2W,|\varphi|) \cdot |V(G)|$ by Courcelle's theorem~\cite{Courcelle90}, where $g$ is a~computable function, and $\varphi$ is a~sentence in monadic second-order logic expressing the induced-minor containment of $H$: \emph{there are $q$~sets of vertices $S_1, \ldots, S_q$ that are pairwise disjoint and each inducing a~connected subgraph, such that there is at~least one edge between $S_i$ and $S_j$ if and only if the $i$-th and $j$-th vertices of $H$ are adjacent.} 
\end{proof}

Let us go back to the proof of our main theorem.
We prove \cref{thm:main} by first applying \cref{lem:clique-subdivision-intro}.
It is easy to find a~2-connected induced subgraph of large treewidth and edge density\footnote{Throughout the paper, we call \emph{edge density} the edge-vertex ratio (hence half of the average degree), following for instance~\cite{sparsity}. Other authors use this term for the number of edges of the graph divided by the number of edges in a~clique on the same number of vertices.} $1+\varepsilon$ (for any $\varepsilon > 0$) within the induced minor of a~large grid (see~\cref{lem:sparse-grid}).
We thus focus on what happens when we get a~large clique subdivision as a~subgraph.
Now leveraging the absence of $K_{t,t}$ subgraph, we use a~classic result by K\"uhn and Osthus~\cite{Kuhn04}, and its strengthening by Dvořák~\cite{Dvorak18}, to get as an induced subgraph a~spanning supergraph of a~long subdivision of a~large clique whose degeneracy $d$ is at~most some function of $t$ and of the treewidth target lower bound~$w$.

The clique is subdivided enough that its edge density is at~most $1+\varepsilon/2$.
But the density of the \emph{extra edges} (those that are induced by the vertices of the clique subdivision but not part of the subdivision) can be as large as~$d$.
By a~minimality property on the clique subdivision and averaging arguments, we find as an induced subgraph the topological minor of a~smaller graph with density of extra edges at~most $\varepsilon/2$, and treewidth at~least~$w$.

This requires to reduce the number of extra edges while ensuring that the number of vertices does not drop as fast.
A~natural idea is to consider random partitions of the \emph{branch vertices} (i.e., vertices of high degree within the subdivided graph) and keep only the branch vertices in one part together with the \emph{subdivision vertices} (i.e., vertices of degree 2 within the subdivision) linking (via what we call \emph{direct paths}) all the pairs of the kept branch vertices.
An issue with that plan is that the direct paths between branch vertices in distinct parts could in principle be (adversarially) longer than the average direct path exactly when the number of extra edges drops satisfactorily.
Thus, possibly, no part of the partition would actually decrease the density of extra edges.

An effective remedy is to extract biclique rather than clique subdivisions.
We randomly partition, on each side of the biclique, the branch vertices in a balanced way, and consider the biclique subdivisions formed by every pair of parts (with one part per side). 
This way, every subdivision vertex is present in one induced subgraph, and we gain the desired control over the vertex-count drop.
The drop in the number of remaining extra edges works since the minimality condition implies that at~least three branch vertices are ``involved'' in locating an extra edge.
Therefore, for an extra edge to survive in a~considered induced subgraph, at least two branch vertices on the same side of the bipartition should land in the same part.
As a~result, in expectation, the number of extra edges overall contained in a~smaller biclique subdivision drops by at~least a~multiplicative factor equal to the number of parts on each side. 
We finally get the following detailed form of~\cref{thm:main}.

\begin{theorem}\label{thm:spec-main}
  For any natural numbers $t$ and $w$, and real $\varepsilon > 0$, there is an integer $W := W(t,w,\varepsilon)$ such that every graph with treewidth at least $W$ and no $K_{t,t}$ subgraph admits an $n$-vertex induced subgraph with treewidth at~least $w$ and at~most $(1+\varepsilon)n$ edges that is either
  \begin{compactitem}
  \item (the line graph of) a~subdivision of the $w \times w$ wall, or
  \item a~spanning supergraph of a~$K_{w,w}$ subdivision.
  \end{compactitem}
\end{theorem}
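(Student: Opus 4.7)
The plan is to invoke \cref{lem:clique-subdivision-intro} on the input graph $G$ of treewidth at least $W$, with $k$ and $s$ chosen as sufficiently large functions of $t$, $w$, and $\varepsilon$. This produces a dichotomy: either $G$ has the $k \times k$ grid as an induced minor, or it contains a subdivision of $K_s$ as a subgraph. In the grid case, since the $k\times k$ grid contains every $w \times w$ wall as an induced minor realized by long paths, I would isolate a \emph{sparse grid} lemma: a routine pigeonhole on the lengths of the branch paths lets us extract a $2$-connected induced subgraph that is either a subdivision of the $w \times w$ wall or its line graph, with edge density at most $1 + \varepsilon$. This already yields the first bullet of the conclusion.

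For the second horn, I would first use the absence of $K_{t,t}$ subgraphs to clean up the clique subdivision. The results of Kühn--Osthus~\cite{Kuhn04} and Dvořák~\cite{Dvorak18} let us pass to an induced subgraph $G[V(S')]$ where $S'$ is a subdivision of $K_{s'}$ (for some $s'$ still very large relative to $w$) in which every original edge is subdivided many times, and whose degeneracy $d$ is bounded by a function of $t$ and $w$. Choosing the length of the subdivisions large enough (as a function of $s'$ and $\varepsilon$), the edges of $S'$ alone contribute an edge density of at most $1+\varepsilon/2$. What remains is to control the \emph{extra edges} of $G[V(S')]$ that do not lie on $S'$, whose density can be as large as~$d$.

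To shrink the extra edges, I would first take $S'$ to be minimal under inclusion of its branch-vertex set; this minimality forces every extra edge to be ``spread out'' among at least three branch vertices. Then I would extract from $S'$ a large biclique subdivision of $K_{a,a}$ (possible as $s'$ is much bigger than $a$), randomly partition the $a$ branch vertices on each side into $r$ equal parts, and for each of the $r^2$ pairs of parts consider the induced subgraph consisting of the corresponding biclique subdivision together with all subdivision vertices on its direct paths. Every subdivision vertex of the $K_{a,a}$ subdivision lies in exactly one of these $r^2$ subgraphs, so the total vertex count is preserved and an averaging argument yields a pair whose vertex count is at least the expectation. Meanwhile, by the minimality-driven three-branch-vertex property, any fixed extra edge survives into a given pair only if two of the involved branch vertices land in the same part on the same side, which happens with probability $O(1/r)$. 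Taking $r$ as an appropriate function of $d$ and $\varepsilon$ pushes the expected extra-edge contribution below $\varepsilon/2$, and a second averaging selects a concrete pair realising both bounds.

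The hardest part is reconciling the parameters. I must choose $s, s', a, r$ and the subdivision length so that simultaneously the remaining $K_{a/r,a/r}$ subdivision has treewidth at least $w$, the subdivision edges contribute density at most $1 + \varepsilon/2$, and the expected extra-edge density falls below $\varepsilon/2$, all while keeping $W(t,w,\varepsilon)$ finite. These constraints must be resolved backwards from $\varepsilon, w, t$: first fix $w$, then $r$ in terms of $d$ and $\varepsilon$, then $a$ large enough that $a/r \geq w$, then $s'$ and the subdivision length, and finally $s$ and $k$ large enough to feed \cref{lem:clique-subdivision-intro}. Once the parameters are set, $2$-connectedness and the final structural form as a spanning supergraph of a $K_{w,w}$ subdivision follow directly from the biclique subdivision scaffold, proving the second bullet.
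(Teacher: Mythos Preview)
Your overall strategy matches the paper's: split via \cref{lem:clique-subdivision-intro}, handle the grid branch by extracting a sparse wall (the paper's \cref{lem:sparse-grid}, with a Ramsey step to pass from a quasi-subdivision to a genuine wall subdivision or its line graph), and in the clique-subdivision branch use K\"uhn--Osthus and Dvo\v{r}\'ak to bound the degeneracy and force long direct paths, then pass to a biclique subdivision and dedensify via random balanced partitions (the paper's \cref{lem:dedensifying}). Two technical points in your write-up do not go through as stated.

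First, minimality ``under inclusion of its branch-vertex set'' is not the condition that yields the three-branch-vertex property for extra edges. A chord on a single direct path, or an edge from a branch vertex $u$ to an interior subdivision vertex of a direct path incident to $u$, involves only two branch vertices, and neither is excluded by branch-vertex minimality (which is vacuous anyway: a $K_{s'}$ subdivision has exactly $s'$ branch vertices). What is needed is \emph{trimness}---each direct path is induced in the spanning supergraph---and this follows from taking the subdivision on an inclusion-minimal \emph{vertex} set: any chord would let you shortcut the path and shrink that set. The paper states and uses trimness explicitly.

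Second, your two separate averagings do not combine. Picking one pair $(i,j)$ whose vertex count is at least the average, and separately one pair whose extra-edge count is at most the average, does not produce a single pair with both properties. The paper's argument is a single ratio averaging: after fixing (via Markov over random partitions) a partition with $\sum_{i,j}|V(G_{i,j})|\geqslant n$ and $\sum_{i,j}|E_{\text{extra}}(G_{i,j})|\leqslant m/h$, one gets some $G_{i,j}$ with
\[
\frac{|E_{\text{extra}}(G_{i,j})|}{|V(G_{i,j})|}\ \leqslant\ \frac{\sum_{i,j}|E_{\text{extra}}(G_{i,j})|}{\sum_{i,j}|V(G_{i,j})|}\ \leqslant\ \frac{m}{hn}.
\]
With $m\leqslant dn$ and $h=\lceil 4d/\varepsilon\rceil$ this is at most $\varepsilon/2$, and together with the $(\geqslant\ell)$-subdivision bound on the non-extra edges gives the claimed $(1+\varepsilon)n$.
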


Technically, our proof of~\cref{thm:main} only yields for the first item a~quasi-subdivision\footnote{See~\cref{subsec:subd} for a~definition.} of the $w \times w$ wall.
However it is known that, by increasing $W$ accordingly and applying Ramsey's theorem, an induced subdivision of the $w \times w$ wall or the line graph of such a~subdivision can be found in a~quasi-subdivision of a~larger wall; see for instance~\cite[Lemma~3.6]{Aboulker21} where wall quasi-subdivisions are called \emph{stone walls}.

In particular, \cref{thm:spec-main} generalizes a~result by Weißauer~\cite{Weissauer19} stating that weakly sparse graphs of large treewidth have long induced cycles.
Indeed, quasi-subdivisions of large walls contain long induced cycles, and to possess only $(1+\varepsilon)n$ edges a~spanning supergraph of a~$K_{w,w}$ subdivision needs a(n induced) path of $\Theta(1/\varepsilon)$ vertices of \emph{overall} degree~2, implying the existence of an induced cycle of length $\Omega(1/\varepsilon)$.
As a~compromise to the now-refuted conjecture of Hajebi that graphs satisfying the conditions of~\cref{thm:main} admit 2-degenerate induced subgraphs of large treewidth~\cite{Hajebi22,Hajebi24}, \cref{thm:main} at~least ensures, for every $\varepsilon > 0$, the existence of induced subgraphs of large treewidth wherein all but an $\varepsilon$ fraction of the vertices have degree~2.

We conclude this section with a~summary of how low can be made the maximum degree/degeneracy/edge density/average degree of a~2-connected induced subgraph with treewidth at~least~$w$ found in graphs of arbitrarily large treewidth and no $K_{t,t}$ subgraph; see~\cref{tbl:summary}.
For the edge density and average degree, \cref{thm:main} gives universal, tight values.
No universal value can work for the maximum degree~\cite{Pohoata14,Davies22} and degeneracy~\cite{Chudnovsky24}.

\begin{table}[h!]
\begin{tabular}{lccc}
  \toprule
   & upper bound & question & lower bound \\
  \midrule
  maximum degree & & $O_t(1) \cdot w$? & any $O_t(1) \cdot o(w)$ \cite{Pohoata14,Davies22} \\
  degeneracy & some $O_w(1)$ \cite{Kuhn04,Girao24,Bourneuf23} & $O_t(1)$? & any $O(1)$~\cite{Chudnovsky24} \\
  edge density & $1+\varepsilon,~\forall \varepsilon > 0$~~(\cref{thm:main}) & $1+O_{t,w}(1) n^{-0.01}$? & 1 \\
  average degree & $2+\varepsilon,~\forall \varepsilon > 0$~~(\cref{thm:main}) & & 2 \\
  \bottomrule
\end{tabular}
\caption{Upper and lower bounds $\beta$ on sparsity parameters $\kappa$ such that graphs of arbitrarily high treewidth with some $K_{t-1,t-1}$ subgraph but no $K_{t,t}$ subgraph guarantee (resp. cannot guarantee) 2-connected ($n$-vertex) induced subgraphs~$H$ of treewidth at least~$w$, with $\kappa(H) \leqslant \beta$.}
\label{tbl:summary}
\end{table}

At first sight, the upper bounds for degeneracy (given by~\cite{Kuhn04,Girao24,Bourneuf23}) are some functions of $w$ \emph{and}~$t$.
However, if $t \leqslant w$, then $O_{w,t}(1) = O_w(1)$, and if instead $t > w$, then any subgraph induced by $2w$ vertices with $w$ vertices picked on each side of some $K_{t-1,t-1}$ subgraph has the requested properties.
A~possible interpretation of~\cref{thm:main,thm:spec-main} is that in addition to $n$ edges necessary to make an $n$-vertex induced subgraph $2$-connected (or $n-1$ edges to simply make it connected), we rely on $o(n)$ additional edges to increase the treewidth.
Can this $o(n)$ be replaced by, say, $O_{t,w}(1) \cdot n^{0.99}$?

\section{Preliminaries}

If $i$ is a~positive integer, we denote by $[i]$ the set of integers $\{1,2,\ldots,i\}$.

\subsection{Subgraphs, induced subgraphs, neighborhoods}

We denote by $V(G)$ and $E(G)$ the set of vertices and edges of a graph $G$, respectively.
A~graph $H$ is a~\emph{subgraph} of a~graph $G$ if $H$ can be obtained from $G$ by vertex and edge deletions.
Graph~$H$ is an~\emph{induced subgraph} of $G$ if $H$ is obtained from $G$ by vertex deletions only.
For $S \subseteq V(G)$, the \emph{subgraph of $G$ induced by $S$}, denoted $G[S]$, is obtained by removing from $G$ all the vertices that are not in $S$ (together with their incident edges).
Then $G-S$ is a short-hand for $G[V(G)\setminus S]$.
A~set $X \subseteq V(G)$ is connected (in $G$) if $G[X]$ has a~single connected component. 

We will often refer to two dense parameterized graphs: the \emph{$t$-clique}, denoted by \emph{$K_t$}, obtained by making adjacent every pair of two distinct vertices over $t$ vertices, and the \emph{biclique $K_{t,t}$} with bipartition $(A,B)$ such that $|A|=|B|=t$ obtained by making every vertex of~$A$ adjacent to every vertex of~$B$.   

We denote by $N_G(v)$ and $N_G[v]$, the open, respectively closed, neighborhood of $v$ in $G$.
For $S \subseteq V(G)$, we set $N_G(S) := (\bigcup_{v \in S}N_G(v)) \setminus S$ and $N_G[S] := N_G(S) \cup S$.
The \emph{degree} $d_G(v)$ of a~vertex $v \in V(G)$ is the size of $N_G(v)$, and the \emph{maximum degree} of $G$, denoted by~$\Delta(G)$, is $\max_{v \in V(G)} d_G(v)$.
A~\emph{subcubic graph} is a~graph of maximum degree at most~3.
The \emph{minimum degree} of $G$ is $\min_{v \in V(G)} d_G(v)$.

In all the previous notations, we may omit the graph subscript if it is clear from the context.
Finally, a~graph $G$ on at~least three vertices is said \emph{2-connected} if it has no vertex whose deletion disconnects~$G$.

\subsection{Tree-decompositions, adhesions, brambles}\label{sec:tree-dec-bramble}

A~\emph{tree-decomposition} of a~graph $G$ is a~pair $(T,\beta)$ where $T$ is a~tree and $\beta$ is a~map from $V(T)$ to $2^{V(G)}$ satisfying the following conditions:
\begin{compactitem}
\item for every $uv \in E(G)$, there is an~$x \in V(T)$ such that $\{u,v\} \subseteq \beta(x)$, and
\item for every $v \in V(G)$, the set of nodes $x \in V(T)$ such that $v \in \beta(x)$ induces a~non-empty subtree of $T$.
\end{compactitem}
The \emph{width} of $(T,\beta)$ is defined as $\max_{x \in V(T)} |\beta(x)| - 1$, and the \emph{treewidth} of $G$, denoted by $\tw(G)$, is the minimum width of $(T,\beta)$ taken among every tree-decomposition $(T,\beta)$ of~$G$.

An \emph{adhesion} of $(T,\beta)$ is any non-empty intersection $\beta(x) \cap \beta(y)$ with $x \neq y \in V(T)$.
We call \emph{adhesion of~$x$} any non-empty intersection $\beta(x) \cap \beta(y)$ with $y \in V(T) \setminus \{x\}$.
The \emph{adhesion size} of $(T,\beta)$ is defined as $\max_{x \neq y \in V(T)} |\beta(x) \cap \beta(y)|$. 

The notion of~\emph{bramble} was introduced by Seymour and Thomas~\cite{SeymourT93} as a~min-max dual to treewidth.
A~\emph{bramble} of a graph~$G$ is a~set $\mathcal B := \{B_1, \ldots, B_q\}$ of connected subsets of $V(G)$ such that for every $i, j \in [q]$ the pair $B_i, B_j$ \emph{touch}, i.e., $B_i \cap B_j \neq \emptyset$ or there is some $u \in B_i$ and $v \in B_j$ with $uv \in E(G)$.
A~\emph{hitting set} of~$\{B_1, \ldots, B_q\}$ is a~set $X$ such that for every $i \in [q]$, $X \cap B_i \neq \emptyset$, that is, $X$ intersects every $B_i$.
The \emph{order} of bramble $\mathcal B$ is the minimum size of a~hitting set of $\mathcal B$.

The \emph{bramble number} of $G$, denoted by $\bn(G)$, is the maximum order of a~bramble of~$G$.
The treewidth and bramble number are tied: For every graph $G$, $\tw(G)=\bn(G)-1$~\cite{SeymourT93}.

\subsection{Minors, induced minors, branch sets}

The \emph{contraction} of an edge $uv$ in a~graph $G$ results in~a graph $G'$ with $V(G')=(V(G) \setminus \{u,v\}) \cup \{w\}$ and $E(G')=E(G-\{u,v\}) \cup \{wx~:~x \in N_G(\{u,v\})\}$.
A~graph $H$ is a~\emph{minor} of a~graph $G$ if $H$ can be obtained from $G$ by edge contractions, and vertex and edge deletions.
It is an \emph{induced minor} if it is obtained by edge contractions and vertex deletions (but no edge deletions).

Minors and induced minors can equivalently be defined via \emph{(induced) minor models}.
A~\emph{minor model} of $H$ in~$G$ is a~collection $\mathcal M := \{B_1, \ldots, B_{|V(H)|}\}$ of pairwise-disjoint connected subsets of $V(G)$, called \emph{branch sets}, together with a~bijective map $\phi: V(H) \to \mathcal M$ such that if $uv \in E(H)$, then there is at~least one edge in $G$ between $\phi(u)$ and $\phi(v)$.
In which case, we may say that the branch sets $\phi(u)$ and $\phi(v)$ are \emph{adjacent}.

An~\emph{induced minor model} is similar with the stronger requirement that $uv \in E(H)$ if and only if there is at~least one edge in $G$ between $\phi(u)$ and $\phi(v)$.
An (induced) minor model $(\{B_1, \ldots, B_h\}, \phi)$ of an $h$-vertex graph $H$ is \emph{minimal} if for every $B'_1 \subseteq B_1, \ldots, B'_h \subseteq B_h$, the fact that $(\{B'_1, \ldots, B'_h\}, \phi')$ is an (induced) minor model of $H$ with $\phi'(u) = B'_i \Leftrightarrow \phi(u) = B_i$ (for every $u \in V(H)$) implies that for every $i \in [h]$, $B'_i = B_i$.
Note that we will mainly work with minor models of cliques, for which both the \emph{non-induced/induced} distinction and the relevance of $\phi$ vanish. 

Note that, as with subgraphs and induced subgraphs, \emph{being a minor of} and \emph{being an induced minor of} are transitive relations.
We will often use this fact, and mainly with induced subgraphs and induced minors.

\subsection{Subdivisions, quasi-subdivisions, topological minors}\label{subsec:subd}

A~\emph{subdivision} of a~graph $H$ is a graph $G$ obtained by replacing each edge of~$H$ by a~path on at~least one edge.
For a~natural number $\ell$, a~\emph{$(\geqslant \ell)$-subdivision} (resp.~\emph{$(\leqslant \ell)$-subdivision}) is obtained by replacing each edge of~$H$ by a~path on at~least $\ell$ edges (resp. at~least one and at~most $\ell$ edges).

Two distinct edges are \emph{incident} if they share an endpoint.
The \emph{line graph} of a~graph $G$ is a~graph with vertex set $E(G)$ and an edge between two vertices if and only if they correspond to incident edges in~$G$. 
A~\emph{tripod} is a~subdivision of $K_{1,3}$, the \emph{star} with three leaves.

If $H$ is a~subcubic graph, a~\emph{quasi-subdivision} of~$H$ is any graph obtained by replacing some vertices of degree~3 by triangles in a~subdivision of $H$.
More precisely, one may replace any vertex $v$ with three neighbors $x,y,z$ in the subdivision of $H$, by a triangle $v_xv_yv_z$ and make $v_x$ adjacent to $x$, $v_y$ adjacent to $y$, and $v_z$ adjacent to $z$.
(Note that the process is not iterative: one is not allowed to further replace a~vertex of a~created triangle by another triangle.)
For instance, the line graph of a $(\geqslant 2)$-subdivision of a~subcubic graph $H$ is a~particular quasi-subdivision of~$H$ (where every vertex of degree 3 was turned into a~triangle).

A~graph $H$ is a~\emph{topological minor} of~$G$ if $G$ contains a~subdivision of~$H$ as a~subgraph.
Topological minors are sometimes called \emph{topological subgraphs}.

\subsection{Degeneracy and edge density}

The \emph{degeneracy} of an $n$-vertex graph $G$ is the least integer~$d$ such that $V(G)$ can be linearly ordered $v_1, \ldots, v_n$ such that for every $i \in [n]$, $v_i$ has at~most $d$~neighbors in $v_{i+1}, \ldots, v_n$.
Note that an $n$-vertex graph with degeneracy at~most~$d$ has at~most $dn$ edges.

In this paper, the \emph{edge density} of a~graph $G$ is the value $|E(G)|/|V(G)|$, which is half of the average degree.
Thus the edge density is upper bounded by the degeneracy.
Conversely, there are graphs of arbitrarily large degeneracy and arbitrarily low edge density: for instance, the disjoint union of a~$t$-clique and $t^3$ isolated vertices, whose degeneracy is~$t-1$ and edge density is less than $1/t$.

\subsection{Shallow minors and expansion}

The \emph{radius $\text{rad}(G)$} of a graph $G$ is defined as $\min_{u \in V(G)} \max_{v \in V(G)} d_G(u,v)$, where $d_G(u,v)$ is the number of edges in a shortest path between $u$ and $v$.
The \emph{radius $\text{rad}_G(S)$} of a subset of vertices $S \subseteq V(G)$ is simply defined as $\text{rad}(G[S])$.
Note that two vertices can be further away in $G[S]$ than in $G$.
A~\emph{depth-$r$ minor} $H$ of $G$, denoted by $H \preccurlyeq_r G$, is a minor of~$G$ with branch sets $B_1, \ldots, B_{\card{V(H)}}$ satisfying $\text{rad}_G(B_i) \leqslant r$ for every $i \in [\card{V(H)}]$.
In particular depth-0 minors correspond to subgraphs.
The theory of graph sparsity pioneered by Nešetřil and Ossona de Mendez~\cite{sparsity} introduces the following invariants for a~graph $G$, for every $r \in \mathbb N$:
$$\nabla_r(G) := \underset{H \preccurlyeq_r G}{\sup}~\frac{|E(H)|}{|V(H)|}.$$
We say that a graph \emph{$G$ has expansion $f$} if $\nabla_r(G) \leqslant f(r)$ for every $r \in \mathbb N$.

\subsection{Grids and walls}

For two positive integers $k, \ell$, the \emph{$k \times \ell$ grid} is the graph on $k\ell$ vertices, say, $v_{i,j}$ with $i \in [k], j \in [\ell]$, such that $v_{i,j}$ and $v_{i',j'}$ are adjacent whenever either $i=i'$ and $|j-j'|=1$ or $j=j'$ and $|i-i'|=1$.
For what comes next, it is helpful to identify vertex $v_{i,j}$ with the point $(i,j)$ of $\mathbb N^2$.
In this paper, for $k \geqslant 2$ the \emph{$k \times k$ wall} is the subgraph of the $2k \times k$ grid obtained by removing every ``vertical edge'' on an ``even column'' when the edge bottom endpoint is on an ``odd row'', and every ``vertical edge'' on an ``odd column'' when the edge bottom endpoint is on an ``even row,'' and finally by deleting the two vertices of~degree~1 that this process creates.   
See~\cref{fig:grids-walls} for an illustration of the $5 \times 5$ grid and $5 \times 5$ wall.
\begin{figure}[h!]
  \centering
  \begin{tikzpicture}[vertex/.style={draw,circle,inner sep=0.035cm}]
    \def\t{5}
    \pgfmathtruncatemacro\tm{\t-1}
    \pgfmathtruncatemacro\tt{2 * \t}
    \pgfmathtruncatemacro\ttm{\tt - 1}
    \pgfmathtruncatemacro\ht{\t / 2}
    \def\s{0.6}

    \foreach \i in {1,...,\t}{
      \foreach \j in {1,...,\t}{
        \node[vertex] (v\i\j) at (\i * \s, \j * \s) {} ;
      }
    }
    \foreach \i in {1,...,\t}{
      \foreach \j [count = \jm from 1] in {2,...,\t}{
        \draw (v\i\j) -- (v\i\jm) ;
        \draw (v\j\i) -- (v\jm\i) ;
      }
    }

    \begin{scope}[xshift=1.4 * \t * \s cm]
    \foreach \i in {1,...,\tt}{
      \foreach \j in {2,...,\tm}{
        \node[vertex] (w\i-\j) at (\i * \s, \j * \s) {} ;
      }
    }
    \foreach \i in {1,...,\ttm}{
      \node[vertex] (w\i-1) at (\i * \s, \s) {} ;
    }
    \foreach \i in {2,...,\tt}{
      \node[vertex] (w\i-\t) at (\i * \s, \t * \s) {} ;
    }

    \foreach \i in {2,...,\tm}{
      \foreach \j [count = \jm from 1] in {2,...,\tt}{
        \draw (w\j-\i) -- (w\jm-\i) ;
      }
    }
    \foreach \j [count = \jm from 1] in {2,...,\ttm}{
        \draw (w\j-1) -- (w\jm-1) ;
    }
    \foreach \j [count = \jm from 2] in {3,...,\tt}{
        \draw (w\j-\t) -- (w\jm-\t) ;
    }

    \foreach \i in {1,...,\t}{
      \pgfmathtruncatemacro\ii{2 * \i}
      \foreach \j in {1,...,\ht}{
        \pgfmathtruncatemacro\jj{2 * \j}
        \pgfmathtruncatemacro\jjp{\jj + 1}
        \draw (w\ii-\jj) -- (w\ii-\jjp) ;
      }
    }
     \foreach \i in {1,...,\t}{
      \pgfmathtruncatemacro\ii{2 * \i - 1}
      \foreach \j in {1,...,\ht}{
        \pgfmathtruncatemacro\jj{2 * \j - 1}
        \pgfmathtruncatemacro\jjp{\jj + 1}
        \draw (w\ii-\jj) -- (w\ii-\jjp) ;
      }
    }
    \end{scope}
    
  \end{tikzpicture}
  \caption{The $5 \times 5$ grid (left) and the $5 \times 5$ wall (right).}
  \label{fig:grids-walls}
\end{figure}
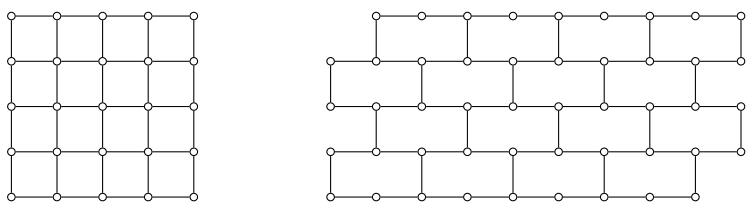
We may denote the $k \times k$ grid by $\Gamma_k$, and the $k \times k$ wall by $W_k$.

\section{Preparatory lemmas: grid and wall induced minors}

The following lemmas have been observed several times (see for instance~\cite{Aboulker21,FominGT11}).

\begin{lemma}\label{lem:grid-eq-wall}
  As induced minors, grids and walls are linearly tied in the sense that for every graph, the containment as an induced minor of  
  \begin{compactitem}
  \item $\Gamma_k$ implies that of $W_{\lfloor k/2 \rfloor}$\emph{;}
  \item $W_k$ implies that of $\Gamma_k$.
  \end{compactitem}
  Furthermore, any graph with $W_k$ as an induced minor contains a~quasi-subdivision of $W_{\lfloor k/3 \rfloor}$ as an induced subgraph.
\end{lemma}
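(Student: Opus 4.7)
My plan is to address the three claims separately. For the first bullet, I would construct an explicit induced-minor model of $W_m$ (with $m=\lfloor k/2\rfloor$) inside $\Gamma_k$: embed the $2m\times m$ underlying grid of the wall into a $2m\times m$ subgrid of $\Gamma_k$ and, for each vertical grid edge that the brick pattern of $W_m$ omits, absorb one spare grid vertex (available thanks to $k\ge 2m$) into one of its two endpoint branch sets, thereby breaking that adjacency while preserving connectivity. For the second bullet, I would go in the opposite direction: $W_k$ is by definition the $2k\times k$ grid with alternating vertical edges deleted and two degree-one corners removed, so its faces are hexagonal bricks; contracting, inside each brick, one horizontal edge in a coherent disjoint pattern collapses every hexagon into a $4$-cycle and yields $\Gamma_k$ as an induced minor.

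For the third statement, let $(\{B_v\}_{v\in V(W_k)},\phi)$ be an induced-minor model of $W_k$ in $G$. I would first coarsen it to an induced-minor model of $W_{k'}$ with $k'=\lfloor k/3\rfloor$, obtained by keeping every third row and column of $W_k$ and merging the intermediate branch sets into their neighbours; each new branch set is then a union of several old ones, which provides the geometric slack used below. Within each new branch set $B$, let $S\subseteq B$ be the (at most three) contact vertices carrying edges to neighbouring branch sets, and pick a minimal connected subgraph of $G[B]$ spanning $S$: a path if $|S|\le 2$, and either a tripod (a subdivided $K_{1,3}$) or a tripod-with-triangular-centre if $|S|=3$, choosing the triangular option exactly when needed to absorb a stray adjacency from a neighbouring branch set. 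Let $H$ be the induced subgraph of $G$ on the union of these chosen subgraphs.

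The induced-minor property rules out edges in $H$ between subgraphs of non-adjacent branch sets, while between adjacent branch sets the only edges are the designated subdivision edges or centre-to-neighbour edges that match exactly the triangle replacements tolerated by the definition of quasi-subdivision. The factor-of-three coarsening gives the geometric room to route tripod legs through well-separated parts of each branch set and to host the triangle replacements. The main obstacle, and the technical heart of the proof, is precisely this last bookkeeping: certifying that the induced subgraph built from the chosen subgraphs has no stray adjacencies beyond the allowed quasi-subdivision pattern. Minimality of the chosen subgraphs controls within-branch-set chords, while the factor-of-three coarsening controls cross-branch-set stray edges; together they should yield the claimed quasi-subdivision of $W_{\lfloor k/3 \rfloor}$.
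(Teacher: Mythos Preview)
Your second bullet matches the paper. Your first is confusingly stated---absorbing an extra vertex into a branch set can only add adjacencies, never remove one---though the fact is standard and the paper just gives a picture. The substantive gap is in the third claim. You pick one contact vertex per neighbour in each coarsened branch set $B$, span these contacts by a minimal connected induced subgraph $T\subseteq G[B]$, and then assert that between adjacent branch sets ``the only edges are the designated subdivision edges or centre-to-neighbour edges.'' But nothing in your construction enforces this: $T$ may well contain a non-contact vertex $t$ adjacent in $G$ to some vertex of the chosen subgraph in an adjacent branch set $B'$, producing a stray chord that the quasi-subdivision pattern does not absorb. You yourself flag this bookkeeping as ``the technical heart,'' yet ``geometric room from the factor-of-three coarsening'' is not an argument, and minimality of a spanning subgraph \emph{within} each bag says nothing about edges from that subgraph to neighbouring bags.

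The paper spends the factor $3$ differently, and this is the idea you are missing. Rather than coarsening to a model of $W_{k'}$, it passes to an induced-minor model of a $(\geqslant 2)$-\emph{subdivision} $H$ of $W_{\lfloor k/3\rfloor}$, and then takes a \emph{minimal model} of $H$---minimal as a model, not merely minimal subgraphs inside each bag. Model minimality forces every branch set to be a path, a tripod, or the line graph of a tripod, and forces each terminal to be the \emph{unique} vertex of its bag adjacent to some specific neighbour; in particular non-terminals carry no edges to neighbouring bags at all. The $(\geqslant 2)$-subdivision ensures that every degree-$3$ vertex of $H$ has only degree-$2$ neighbours, so in the one delicate case---a path-shaped branch set $B$ of degree $3$---its third neighbour $B'$ is itself a path whose single endpoint $z$ is the only vertex of $B'$ touching $B$. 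All stray $B$--$B'$ edges therefore land on $z$, and trimming the segment of $B$ between its two attachments to $z$ produces exactly the triangle allowed by a quasi-subdivision. Without model minimality you cannot rule out non-terminals touching neighbours; without the $(\geqslant 2)$-subdivision you cannot pin the stray contacts from $B'$ to a single vertex.
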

\begin{proof}
  \Cref{fig:wk-gk} is a~visual proof of the two items. 
  \begin{figure}[h!]
  \centering
  \begin{tikzpicture}[vertex/.style={draw,circle,inner sep=0.035cm}]
    \def\t{8}
    \def\s{0.4}

    \foreach \i in {1,...,\t}{
      \foreach \j in {1,...,\t}{
        \node[vertex] (v\i\j) at (\i * \s, \j * \s) {} ;
      }
    }
    \foreach \i in {1,...,\t}{
      \foreach \j [count = \jm from 1] in {2,...,\t}{
        \draw (v\i\j) -- (v\i\jm) ;
        \draw (v\j\i) -- (v\jm\i) ;
      }
    }

    \foreach \i/\j in {2/1,4/1,6/1,1/3,3/3,5/3,2/5,4/5,6/5}{
      \pgfmathtruncatemacro\ip{\i+1}
      \pgfmathtruncatemacro\ipp{\ip+1}
      \pgfmathtruncatemacro\jp{\j+1}
      \pgfmathtruncatemacro\jpp{\jp+1}
      \draw[very thick, blue] (v\i\j) -- (v\ip\j) -- (v\ipp\j) -- (v\ipp\jp) -- (v\ipp\jpp) -- (v\ip\jpp) -- (v\i\jpp) -- (v\i\jp) -- (v\i\j)  ;
      \node[draw,blue,rectangle,fill opacity=0.05, fill,inner sep=0.08cm,rounded corners,fit = (v\i\jp) (v\i\jpp)] {} ;
    }

    \foreach \i/\j in {6/1,5/3,6/5}{
      \pgfmathtruncatemacro\ip{\i+1}
      \pgfmathtruncatemacro\ipp{\ip+1}
      \pgfmathtruncatemacro\jp{\j+1}
      \pgfmathtruncatemacro\jpp{\jp+1}
      \node[draw,blue,rectangle,fill opacity=0.05, fill,inner sep=0.08cm,rounded corners,fit = (v\ipp\jp) (v\ipp\jpp)] {} ;
    }

    \begin{scope}[xshift=1.4 * \t * \s cm, yshift=0.2 cm]
     \def\t{7}
    \pgfmathtruncatemacro\tm{\t-1}
    \pgfmathtruncatemacro\tt{2 * \t}
    \pgfmathtruncatemacro\ttm{\tt - 1}
    \pgfmathtruncatemacro\ht{\t / 2}

    \foreach \i in {1,...,\tt}{
      \foreach \j in {2,...,\tm}{
        \node[vertex] (w\i\j) at (\i * \s, \j * \s) {} ;
      }
    }
    \foreach \i in {1,...,\ttm}{
      \node[vertex] (w\i1) at (\i * \s, \s) {} ;
    }
    \foreach \i in {2,...,\tt}{
      \node[vertex] (w\i\t) at (\i * \s, \t * \s) {} ;
    }

    \foreach \i in {2,...,\tm}{
      \foreach \j [count = \jm from 1] in {2,...,\tt}{
        \draw (w\j\i) -- (w\jm\i) ;
      }
    }
    \foreach \j [count = \jm from 1] in {2,...,\ttm}{
        \draw (w\j1) -- (w\jm1) ;
    }
    \foreach \j [count = \jm from 2] in {3,...,\tt}{
        \draw (w\j\t) -- (w\jm\t) ;
    }

    \foreach \i in {1,...,\t}{
      \pgfmathtruncatemacro\ii{2 * \i}
      \foreach \j in {1,...,\ht}{
        \pgfmathtruncatemacro\jj{2 * \j}
        \pgfmathtruncatemacro\jjp{\jj + 1}
        \draw (w\ii\jj) -- (w\ii\jjp) ;
      }
    }
     \foreach \i in {1,...,\t}{
      \pgfmathtruncatemacro\ii{2 * \i - 1}
      \foreach \j in {1,...,\ht}{
        \pgfmathtruncatemacro\jj{2 * \j - 1}
        \pgfmathtruncatemacro\jjp{\jj + 1}
        \draw (w\ii\jj) -- (w\ii\jjp) ;
      }
     }

     \pgfmathsetmacro\h{0.35 * \s}

     \foreach \j in {1,...,\t}{
       \foreach \i in {1,3,5,7,...,\tt}{
      \draw[blue, fill opacity=0.05, fill, rounded corners] (\s * \i - \h,\s * \j - \h) -- (\s * \i - \h,\s * \j + \h) -- (\s * \i + \s + \h,\s * \j + \h) -- (\s * \i + \s + \h,\s * \j - \h) -- cycle ;
       }
     }
    \end{scope}
  \end{tikzpicture}
  \caption{Proof by picture of the two items of~\cref{lem:grid-eq-wall}:
    $\Gamma_8$ admits a~$W_4$ induced minor (left) and $W_7$ admits a~$\Gamma_7$ induced minor (right).
    The shaded boxes represent the contractions to perform.
    The vertices not incident to a~blue edge (left) should be deleted.}
  \label{fig:wk-gk}
  \end{figure}
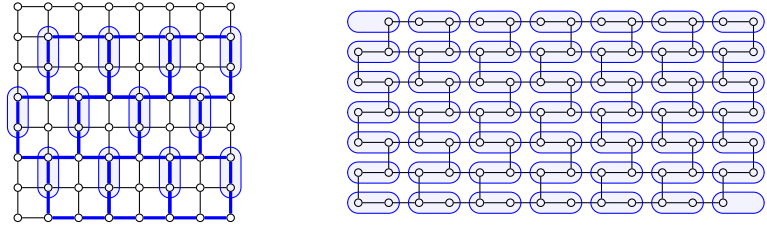
  Let $G$ be a~graph containing $W_k$ as an induced minor.
  Observe that $G$ then contains $H$, a~$(\geqslant 2)$-subdivision of $W_{\lfloor k/3 \rfloor}$, as an induced minor. 
  Let $(\mathcal M, \phi)$ be a~minimal induced minor model of $H$ in~$G$.
  We remark that $H$ is a~subcubic graph of minimum degree~2, such that every vertex of degree~3 in $H$ has only neighbors of degree~2.

  It can be seen that every branch set $B$ of $\mathcal M$ induces a~path (possibly on a~single vertex), a~tripod, or the line graph of a~tripod.
  We call~\emph{terminal} any vertex in~$B$ whose deletion is not disconnecting $G[B]$.
  By minimality, any~terminal $w$ of $B$ is adjacent to a~branch set $B'$ that is non-adjacent to $B \setminus \{w\}$.
  We refer to the \emph{degree} of a~branch set as the number of other branch sets it is adjacent~to, and we may say that two branch sets $B, B'$ are neighbors if they are adjacent.

  If $G[B]$ is a~tripod or the line graph of a~tripod, then $B$ has exactly three terminals, and no non-terminal of $B$ can be adjacent to another branch set of $\mathcal M$.
  The latter property trivially holds when $|B|=1$ (and there is then exactly one terminal in $B$).
  In both cases, $B$~can be entirely kept to (locally) form the wall quasi-subdivision.

  Therefore let us assume that $B$ induces a~path on at~least two vertices.
  Thus $B$ has exactly two terminals $u$ and $v$, the extremities of the path $G[B]$.
  Again, if $B$ has only two neighbors, no non-terminal of $B$ is adjacent to another branch set, and $B$ can be entirely kept.
  So we further assume that $B$ has three neighbors.

  Let $B_u$ (resp.~$B_v$) a~branch set of $\mathcal M$ adjacent to $u$ (resp. to $v$) and no other vertex of $B$.
  Let $B'$ be the third neighbor of $B$.
  Let $x$ be the vertex closest to $u$ (possibly $u$) along $G[B]$ that is adjacent to $B'$.
  Similarly, let $y$ be the vertex closest to $v$ (possibly $v$) along $G[B]$ that is adjacent to $B'$. 
  If $x=y$, then we can entirely keep $B$ to build the wall quasi-subdivision.
  Let $P$ be the (possibly empty) subset of vertices between $x$ and $y$ along path $G[B]$.  

  By the definition of $H$, branch set $B'$ has degree~2.
  Hence $x$ and $y$ have the same unique neighbor $z$ in $B'$.
  We remove $P$ from our quasi-subdivision.
  Then either $x$ and $y$ are adjacent, and $xyz$ is a~triangle of the quasi-subdivision, or $P$ was non-empty.
  In both cases, we get (locally) a~quasi-subdivision of $H$ at the cost of potentially smoothing out~$B'$, that is, shrinking the $(\geqslant 2)$-subdivision in a~subdivision.
  Thus we eventually get a~quasi-subdivision of $W_{\lfloor k/3 \rfloor}$ as an induced subgraph of~$G$.
\end{proof}

We observe that getting a~large grid or wall induced minor is sufficient for our purposes.

\begin{lemma}\label{lem:sparse-grid}
  For any natural number $w$ and real $\varepsilon > 0$, any graph having $\Gamma_k$ or $W_k$ as an induced minor with $k := \lfloor \frac{30 w}{\varepsilon} \rfloor$ contains a~2-connected $n$-vertex induced subgraph with treewidth at~least $w$ and at~most $(1+\varepsilon)n$ edges.  
\end{lemma}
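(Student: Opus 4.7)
The plan is to use the induced minor to locate, as an induced subgraph of $G$, a long subdivision of a small wall, which will automatically have density close to $1$.

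By the second item of Lemma~\ref{lem:grid-eq-wall}, we may assume $G$ has $\Gamma_k$ as an induced minor. Set $\ell := \lfloor k/(4w) \rfloor = \Omega(1/\varepsilon)$. We first observe that $\Gamma_k$ contains a $(\geq \ell)$-subdivision of $W_{2w}$ as an induced subgraph: selecting the rows and columns of $\Gamma_k$ at positions $1, \ell+1, 2\ell+1, \dots$ and taking the induced subgraph on the union of the selected rows and columns gives a $(\geq \ell)$-subdivision (in fact each edge of length exactly $\ell$) of $\Gamma_{4w}$; restricting further to the vertex subset that witnesses $W_{2w}$ as an induced subgraph of $\Gamma_{4w}$ yields the desired graph. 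By transitivity of induced-minor containment, this $(\geq \ell)$-subdivision of $W_{2w}$ is also an induced minor of~$G$.

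The heart of the argument is then to re-run the proof of the third item of Lemma~\ref{lem:grid-eq-wall} on this richer input: that argument only uses that the input is a subcubic graph of minimum degree~$2$ whose every degree-$3$ vertex has only degree-$2$ neighbors, which remains true for any $(\geq \ell)$-subdivision of $W_{2w}$ with $\ell \geq 2$. Taking a minimal induced minor model of the $(\geq \ell)$-subdivision of $W_{2w}$ in~$G$, the branch sets are paths, tripods, or line-graphs-of-tripods, and the same case analysis produces an induced subgraph $H$ of $G$ that is a quasi-subdivision of $W_{2w}$ in which each edge of $W_{2w}$ corresponds to a path of length at least $\ell - 2$ in~$H$ (at most one subdivision vertex at each path endpoint may be ``smoothed'' into an adjacent branch triangle). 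We expect this step to be the main obstacle, as one must carefully track the subdivision-length loss from smoothings.

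It then remains to verify that $H$ meets the three requirements. It is $2$-connected, since walls are $2$-connected and both subdivisions and replacements of degree-$3$ vertices by triangles preserve $2$-connectedness. It has treewidth at least~$w$: $H$ has $W_{2w}$ as a minor (contract all the paths and triangles), and applying the second item of Lemma~\ref{lem:grid-eq-wall} to $W_{2w}$ itself gives $\tw(W_{2w}) \geq \tw(\Gamma_{2w}) = 2w \geq w$. Finally, a direct count (using $3|V(W_{2w})| \approx 2|E(W_{2w})|$) yields edge density at most $1 + 1/(\ell - 1) \leq 1 + \varepsilon$ by the choice of $\ell$.
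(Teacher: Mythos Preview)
Your approach is sound but takes a different route from the paper's. You sparsify \emph{first} at the induced-minor level—locating a $(\geqslant\ell)$-subdivision of the small wall $W_{2w}$ inside $\Gamma_k$—and then re-open the extraction argument of Lemma~\ref{lem:grid-eq-wall}, carefully tracking how much path length the smoothings can cost (as you anticipate, at most one at each end of a wall edge, so $\ell-2$ survives). The paper does the opposite: it invokes Lemma~\ref{lem:grid-eq-wall} as a black box to obtain a quasi-subdivision of the \emph{large} wall $W_{\lfloor k/7\rfloor}$ already as an induced subgraph of $G$, and only \emph{afterwards} sparsifies inside that concrete subgraph by skipping every other row and keeping only every $(\lceil 4/\varepsilon\rceil+1)$-st column. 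The density bound then follows from the one-line observation that deleting at most two edges per degree-$3$ vertex of the thinned wall leaves a forest, so $|E(H')|\leqslant |V(H')|-1+2q$ with $q$ the number of degree-$3$ vertices. This post-hoc sparsification is trivial once one is inside an actual induced subgraph, so the paper entirely avoids re-running the minimality and smoothing analysis. Your route buys an explicit lower bound on subdivision lengths; the paper's buys a noticeably shorter proof.

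One small inaccuracy to fix: $W_{2w}$ is \emph{not} an induced subgraph of $\Gamma_{4w}$ (it is obtained from the $4w\times 2w$ grid by edge deletions). What you actually need, and what is true, is that the $\ell$-subdivision of $W_{2w}$ is an induced subgraph of the $\ell$-subdivision of $\Gamma_{4w}$: deleting the subdivision vertices on a direct path removes the corresponding grid edge without creating an edge between its endpoints, since $\ell>1$. Also, your final density estimate $1+1/(\ell-1)$ is the right order but not literally what a direct count gives; the paper's forest-plus-$2q$ argument is the cleanest way to finish.
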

\begin{proof}
  By~\cref{lem:grid-eq-wall}, any graph $G$ with a~$\Gamma_k$ or $W_k$ induced minor has an induced quasi-subdivision of $W_{\lfloor k/7 \rfloor}$.
  We extract an induced~quasi-subdivision of $H$ from the one of~$W_{\lfloor k/7 \rfloor}$ by skipping every other row, and only keeping every $(\lceil 4/\varepsilon \rceil + 1)$-st ``column;'' see~\cref{fig:wall-decr-density}.
  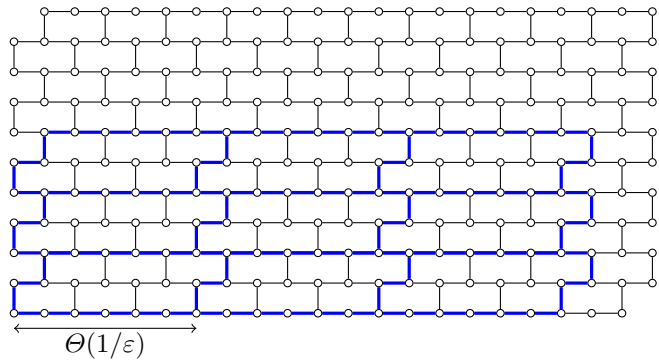
\begin{figure}[h!]
  \centering
  \begin{tikzpicture}[vertex/.style={draw,circle,inner sep=0.035cm}]
    \def\t{11}
    \pgfmathtruncatemacro\tm{\t-1}
    \pgfmathtruncatemacro\tt{2 * \t}
    \pgfmathtruncatemacro\ttm{\tt - 1}
    \pgfmathtruncatemacro\ht{\t / 2}
    \def\s{0.4}

    \foreach \i in {1,...,\tt}{
      \foreach \j in {2,...,\tm}{
        \node[vertex] (w\i-\j) at (\i * \s, \j * \s) {} ;
      }
    }
    \foreach \i in {1,...,\ttm}{
      \node[vertex] (w\i-1) at (\i * \s, \s) {} ;
    }
    \foreach \i in {2,...,\tt}{
      \node[vertex] (w\i-\t) at (\i * \s, \t * \s) {} ;
    }

    \foreach \i in {2,...,\tm}{
      \foreach \j [count = \jm from 1] in {2,...,\tt}{
        \draw (w\j-\i) -- (w\jm-\i) ;
      }
    }
    \foreach \j [count = \jm from 1] in {2,...,\ttm}{
        \draw (w\j-1) -- (w\jm-1) ;
    }
    \foreach \j [count = \jm from 2] in {3,...,\tt}{
        \draw (w\j-\t) -- (w\jm-\t) ;
    }

    \foreach \i in {1,...,\t}{
      \pgfmathtruncatemacro\ii{2 * \i}
      \foreach \j in {1,...,\ht}{
        \pgfmathtruncatemacro\jj{2 * \j}
        \pgfmathtruncatemacro\jjp{\jj + 1}
        \draw (w\ii-\jj) -- (w\ii-\jjp) ;
      }
    }
     \foreach \i in {1,...,\t}{
      \pgfmathtruncatemacro\ii{2 * \i - 1}
      \foreach \j in {1,...,\ht}{
        \pgfmathtruncatemacro\jj{2 * \j - 1}
        \pgfmathtruncatemacro\jjp{\jj + 1}
        \draw (w\ii-\jj) -- (w\ii-\jjp) ;
      }
     }

     \foreach \i/\j in {1/1,7/1,13/1, 1/3,7/3,13/3, 1/5,7/5,13/5}{
       \pgfmathtruncatemacro\jc{\j+2}
       \foreach \h in {0,...,5}{
         \pgfmathtruncatemacro\ip{\i+\h}
         \pgfmathtruncatemacro\ipp{\ip+1}
         \draw[draw,very thick,blue] (w\ip-\j) -- (w\ipp-\j) ;
       }
       \foreach \h in {1,...,6}{
         \pgfmathtruncatemacro\ip{\i+\h}
         \pgfmathtruncatemacro\ipp{\ip+1}
         \draw[draw,very thick,blue] (w\ip-\jc) -- (w\ipp-\jc) ;
       }
     }

     \foreach \i/\j in {1/1,7/1,13/1,19/1, 1/3,7/3,13/3,19/3, 1/5,7/5,13/5,19/5}{
       \pgfmathtruncatemacro\ip{\i+1}
       \pgfmathtruncatemacro\jp{\j+1}
       \pgfmathtruncatemacro\jpp{\jp+1}
       \draw[draw,very thick,blue] (w\i-\j) -- (w\i-\jp) -- (w\ip-\jp) -- (w\ip-\jpp) ;
     }

     \draw[<->] (\s,0.5 * \s) -- (7 * \s,0.5 *\s) ;
     \node at (4 * \s, - 0.4 * \s) {$\Theta(1/\varepsilon)$} ; 
  \end{tikzpicture}
  \caption{An induced quasi-subdivision (in blue) of large treewidth and edge density $(1+\varepsilon)$ in a~quasi-subdivision of the wall.
  Represented vertices can be vertices or triangles, edges are paths.}
  \label{fig:wall-decr-density}
  \end{figure}
  Let $H'$ be the induced subgraph of~$G$ corresponding to $H$.
  It can be seen that $H'$ is 2-connected, and $\tw(H') \geqslant \tw(H) \geqslant w$ since $H$ contains a~subdivision of the $w \times w$ wall.
  Notice that removing up to two edges of $G$ per vertex of degree 3 in $H$ (since this vertex can correspond to a~triangle in $G$) turns $H'$ into a~disjoint union of paths.
  Thus $|E(H')| \leqslant |V(H')|-1+2q$ where $q$ is the number of vertices of degree 3 in $H$.
  We conclude since $q$ is less than $\frac{\varepsilon}{2} |V(H')|$.
\end{proof}

\section{Finding a large clique as a topological minor}

\cref{lem:sparse-grid} implies that, should our graph contain a~large grid or wall induced minor, we would immediately reach our goal, the conclusion of~\cref{thm:main}.
We already observed that even among weakly sparse graphs, there are graphs of arbitrarily large treewidth avoiding a~constant-sized grid as an induced minor.
Nonetheless, we will show that such problematic graphs all contain a~large clique subdivision as a~subgraph.
This essentially consists of combining results by Korhonen~\cite{Korhonen23} and by Fomin, Golovach, and Thilikos~\cite{FominGT11}, respectively establishing the presence of large grids as induced minor when the maximum degree or Hadwiger number is small, with the structure theorem of graphs excluding a~topological minor by Grohe and Marx~\cite{GroheM15}.

We start by recalling the fairly recent, inspirational result of Korhonen. 

\begin{theorem}[\cite{Korhonen23}, Theorem 1]\label{thm:korhonen}
  There is a constant $c$ such that for any natural number $k$, every graph $G$ with treewidth at least $c \cdot k^{10} 2^{\Delta(G)^5}$ admits the~$k \times k$ grid as an induced minor.
\end{theorem}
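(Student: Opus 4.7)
The plan is to bootstrap the ordinary (non-induced) Grid Minor theorem using the bounded maximum degree, which gives enough geometric control over parasitic edges to clean them away by a Ramsey-type argument. Set $K := k \cdot 2^{\Theta(\Delta(G)^5)}$ and start by applying the polynomial Grid Minor theorem (e.g.~Chuzhoy--Tan, with an exponent around $9$--$10$) to $G$: since $\tw(G) \geqslant c \cdot k^{10} 2^{\Delta(G)^5}$, we extract a $K \times K$ grid minor of $G$ with branch sets $\{B_{i,j}\}_{(i,j) \in [K]^2}$. Choose the minor model to minimise $\sum |B_{i,j}|$; because $\Delta(G)$ is bounded, each $B_{i,j}$ then induces a subtree of $G$ whose radius is bounded in terms of $\Delta(G)$ alone, and each branch set touches at most $\Delta(G)|B_{i,j}|$ vertices outside.

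The only obstacle to this being an induced minor is the presence of \emph{parasitic edges}: edges of $G$ between $B_{i,j}$ and $B_{i',j'}$ for non-adjacent grid cells $(i,j),(i',j')$. The second step is to classify cells by their local behaviour. For a suitable radius $r = r(\Delta)$, assign to $(i,j)$ its \emph{type}, meaning the isomorphism pattern of the subgraph of $G$ induced by $B_{i,j}$ together with all branch sets at grid distance at most $r$, marked with their parasitic edges to $B_{i,j}$. Since $\Delta(G)$ bounds both $|B_{i,j}|$ and the number of cells that can be reached via a parasitic edge, the number of possible types is at most $2^{O(\Delta(G)^5)}$.

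The third step is a grid-Ramsey cleaning: colour each cell of $[K]^2$ by its type and, because $K / k \geqslant 2^{\Omega(\Delta(G)^5)}$, extract a monochromatic $k \times k$ subgrid of cells $\{B_{i_\alpha, j_\beta}\}_{(\alpha,\beta) \in [k]^2}$ whose pairwise interactions (presence or absence of parasitic edges) are uniform. Uniformity plus the bounded degree of the ``parasitic graph'' on branch sets implies that every remaining parasitic edge has one of finitely many short-range ``offsets'' in the subgrid; these offsets can then be killed by one final thinning (keeping every $d$-th row and column for a constant $d = d(\Delta)$), at no further asymptotic cost to $k$. After contracting each surviving $B_{i_\alpha, j_\beta}$, the resulting $k \times k$ subgrid is an induced minor of $G$.

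The main obstacle is controlling the \emph{long-range} parasitic edges, i.e.~edges between two branch sets that are far apart in the grid: geometrically they can connect almost any pair of cells, since $G$ need not embed on a surface. The bounded-degree hypothesis is what saves us here, since it caps the total number of parasitic edges incident to any single branch set and thereby caps the number of types in the Ramsey step. The quantitative shape of the bound $c \cdot k^{10} 2^{\Delta(G)^5}$ then transparently reflects the two sources of blow-up: the $k^{10}$ comes from the polynomial Grid Minor theorem applied after a polynomial Ramsey step on $[K]^2$, and the $2^{\Delta(G)^5}$ comes from typing branch sets by their local parasitic pattern.
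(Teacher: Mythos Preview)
The paper does not prove this statement; it is quoted from~\cite{Korhonen23} and used as a black box. There is thus no in-paper proof to compare against, but your proposal can be assessed on its own terms, and it has a genuine gap.

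Your claim that in a minimal grid minor model ``each $B_{i,j}$ induces a subtree of $G$ whose radius is bounded in terms of $\Delta(G)$ alone'' is false. Subdivide every edge of $\Gamma_K$ exactly $N$ times; the resulting graph has maximum degree~$4$, yet any $\Gamma_K$ minor model must absorb the subdivision paths, forcing branch sets of size and radius $\Theta(N)$ for arbitrary~$N$. (Concretely, each interior branch set needs four outgoing edges and hence contains an original grid vertex of degree at least~$3$; two adjacent such branch sets then jointly span a path of length at least~$N$ between their high-degree vertices.) Minimising $\sum |B_{i,j}|$ does not help, since no smaller model exists. Your second step then collapses: you explicitly use ``$\Delta(G)$ bounds $|B_{i,j}|$'' to cap the number of types at $2^{O(\Delta^5)}$, and with unbounded branch sets there are unboundedly many local patterns, so the grid-Ramsey step has nothing well-defined to act on.

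The route that actually works (and that Korhonen takes) is to start from a large wall \emph{subdivision} obtained as a subgraph, not from a grid minor. Branch vertices are then single vertices of~$G$ and the direct paths are honest paths; bounded degree now genuinely caps the number of parasitic edges leaving any single vertex. Even so, the subdivision paths can be arbitrarily long, so one still cannot bound a ``local type'' by~$\Delta$ alone; Korhonen instead runs an iterative cleaning on nested subwalls that progressively eliminates long-range parasitic edges. If you want to repair your outline, the first correction is to trade the grid minor for a wall subgraph and then redesign the cleaning step without assuming any bound on path lengths.
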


Previously, the same conclusion was attained on graphs excluding a~fixed minor.

\begin{theorem}[\cite{FominGT11},Theorem 2]\label{thm:fgt11}
  For any graph $H$, there is a $c_H$ such that every $H$-minor-free graph of treewidth at~least $c_H \cdot k^2$ admits the~$k \times k$ grid as an induced minor. 
\end{theorem}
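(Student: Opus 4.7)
My approach will be a two-stage reduction: first I would extract a large ordinary grid \emph{minor} in $G$ via the excluded-grid theorem for proper minor-closed classes, then refine it to an \emph{induced} grid minor by exploiting $H$-minor-freeness. The quadratic dependence $c_H k^2$ versus the target $k$ should account for the loss incurred in the refinement step.

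For the first stage, since $G$ is $H$-minor-free it is $K_t$-minor-free for $t := |V(H)|$. I would invoke the linear excluded-grid theorem of Demaine--Hajiaghayi for proper minor-closed classes, which yields a constant $\alpha_t$ such that $\tw(G) \geq \alpha_t N$ forces a $\Gamma_N$ minor. Setting $N := c_1 k^2$ and $c_H := \alpha_t c_1$ for a suitable $c_1$, we get a minimal minor model $\mathcal{M} = (B_{(i,j)})_{(i,j) \in [N]^2}$ of $\Gamma_N$ in $G$. Contracting each branch set produces a graph $G'$ on vertex set $[N]^2$ that, being a minor of $G$, is still $K_t$-minor-free, and that contains $\Gamma_N$ as a subgraph. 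The task then reduces to locating $\Gamma_k$ as an induced subgraph of $G'$.

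For the second stage, call \emph{chord} any edge of $G'$ not in $\Gamma_N$. The plan relies on two structural consequences of $K_t$-minor-freeness of $G'$. First, a \emph{topological} restriction on chords: any chord joining cells at large grid-distance, or a family of pairwise ``crossing'' chords in the planar layout of $\Gamma_N$, can be combined with $t-1$ pairwise disjoint horizontal/vertical grid paths to realize $K_t$ as a minor of $G'$, contradicting $K_t$-minor-freeness. Second, the Mader--Kostochka--Thomason sparsity bound limits the total chord count, and is further refined by the crossing restriction. I would then partition $[N]^2$ into $\Theta(k^2)$ disjoint $k \times k$ blocks separated by padded margins of width $r_t$, so that every surviving chord lies within a single block plus its padding, and use an averaging (or iterated-deletion) argument to select a block in which no internal chord survives. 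That block is then the sought induced $\Gamma_k$ of $G'$, and hence an induced $\Gamma_k$ minor of $G$.

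\emph{Main obstacle.} The most delicate step is making the chord-restriction quantitative enough to close the averaging. The routing argument ``grid plus a long chord yields a $K_t$ minor'' is a concrete combinatorial claim in grid graphs, but its precise constants dictate the padding width $r_t$ and therefore the quadratic dependence $N = \Theta(k^2)$ in the statement; getting the right interplay between chord density, chord locality, and block size is what I expect to cost the most effort. If this direct routing analysis proves too delicate, a fallback is to invoke the Robertson--Seymour structure theorem for $H$-minor-free graphs: large treewidth then forces a decomposition bag whose torso is almost-embeddable in a bounded-genus surface, and inside such a torso one can hope to read off a large induced grid minor directly from its planar skeleton.
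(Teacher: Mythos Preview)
The paper does not prove this theorem from scratch. It is quoted as a result of Fomin--Golovach--Thilikos, and the only argument the paper supplies is the short translation paragraph right after the statement: FGT11 actually show that a connected $H$-minor-free graph of large treewidth \emph{contracts} (edge contractions only, no vertex deletions) to the $k\times k$ triangulated grid or to that graph plus a universal vertex; the paper then observes that either outcome admits $\Gamma_{\lfloor k/2\rfloor-1}$ as an induced minor, which yields the formulation above. So the benchmark you are comparing against is ``cite the black box and do a two-line induced-minor extraction from the triangulated grid.''

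Your proposal is therefore a genuinely different, from-scratch attempt, and its second stage has a concrete gap. The assertion that a chord of large grid-distance forces a $K_t$ minor is false: the $N\times N$ grid together with the single edge $\{(1,1),(N,N)\}$ is planar (route the chord along the outer face), hence $K_5$-minor-free, yet carries a chord of grid-distance $2(N-1)$. More generally one can add $\Omega(N)$ pairwise non-crossing long chords along the boundary and remain planar, so chord-locality simply does not follow from $K_t$-minor-freeness, and your padded-block averaging cannot get off the ground on that premise. The ``pairwise crossing chords'' variant is on firmer footing---many pairwise crossing chords do build a large clique minor---but the conclusion you then get is only that the chord set is planar-like, which still permits long chords spoiling every block simultaneously. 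Extracting an induced grid from a grid-plus-planar-chord-system is essentially the content of the FGT11 contraction argument (their triangulated-grid outcome is exactly the ``chords are diagonals'' situation, and the extra apex absorbs the remaining obstructions); your Robertson--Seymour fallback is much closer to what the cited proof actually does than your primary plan.
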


As~\cite[Theorem 2]{FominGT11} is phrased in terms of contraction (where vertex deletions are disallowed) rather than induced minor, the authors further have to require that the $H$-minor-free graph is connected.
Besides, the conclusion is that one obtains one of two parameterized graphs as a contraction: the triangulated grid or the triangulated grid plus one universal vertex.
\Cref{thm:fgt11} holds since the $k \times k$ triangulated grid (with or without an additional universal vertex) admits the~$k' \times k'$ grid as an induced minor, with $k' := \lfloor k/2 \rfloor - 1$.

Given a~tree-decomposition $(T,\beta)$ of a~graph $G$, and $x \in V(T)$, we define $G_x$ as the graph obtained from $G$ by contracting each connected component of $G-\beta(x)$ into a~single vertex, and then keeping only one representative per equivalence class of false twins among the contracted vertices. 

\begin{theorem}[\cite{GroheM15}, weaker statement than Theorem 4.1]\label{thm:grohe-marx}
  Let $H$ be a~$k$-vertex graph.
  There is a function $f: \mathbb N \to \mathbb N$ such that every graph $G$ excluding $H$ as a~topological minor admits a~tree-decomposition $(T,\beta)$ of adhesion size at~most $f(k)$ such that for every $x \in V(T)$, $G_x$~either excludes $K_{f(k)}$ as a~minor or has at~most $f(k)$ vertices of degree larger than $f(k)$.
\end{theorem}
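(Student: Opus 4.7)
The plan is to derive the statement as a~corollary of Grohe and Marx's original theorem (stated with the standard torsos~$T_x$) by checking that both alternative conclusions transfer from $T_x$ to the induced-minor-friendly variant~$G_x$. Apply their theorem first to obtain a~tree-decomposition $(T,\beta)$ of~$G$ of adhesion size at~most some $f_0(k)$, such that every $T_x$ either excludes $K_{f_0(k)}$ as a~minor or has at~most $f_0(k)$ vertices of degree greater than~$f_0(k)$. A~standard property of tree-decompositions gives that, for every connected component $C$ of $G-\beta(x)$, the set $N_G(C)\cap\beta(x)$ lies inside a~single adhesion at~$x$ (the adhesion of the edge of~$T$ leaving~$x$ towards the subtree hosting~$C$); in particular, every representative vertex $c_C$ of~$G_x$ has degree at~most $f_0(k)$, and two distinct representatives are never adjacent in~$G_x$.

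For the minor transfer, I~would show that if $G_x$ contains $K_N$ as a~minor for some $N\geq f_0(k)+2$, then so does $T_x$. Take any minor model $(B_1,\ldots,B_N)$ of $K_N$ in~$G_x$. If some $B_i=\{c_C\}$ were a~singleton representative, its only neighbors would sit in an~adhesion $A$ of size at~most~$f_0(k)$, forcing the remaining $N-1\geq f_0(k)+1$ branch sets to each meet~$A$, a~contradiction. Hence every $B_i$ meets~$\beta(x)$; set $B'_i:=B_i\cap\beta(x)$. Each $B'_i$ is connected in~$T_x$, since any path through $B_i$ in~$G_x$ can bypass each representative via the corresponding adhesion clique of~$T_x$. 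Likewise, any cross-edge between $B_i$ and $B_j$ in~$G_x$ either already lies in $T_x[\beta(x)]$, or factors through a~representative whose two relevant endpoints in~$\beta(x)$ lie in a~common adhesion, and are therefore adjacent in~$T_x$. Thus the $B'_i$'s form a~$K_N$ minor of~$T_x$. Contrapositively, if $T_x$ excludes $K_{f_0(k)}$ as a~minor, then $G_x$ excludes $K_{f_0(k)+2}$ as a~minor.

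For the degree transfer, take any $v\in\beta(x)$ with $d_{T_x}(v)\leq f_0(k)$, and set $U:=N_{T_x}(v)$. Every adhesion containing~$v$ is a~subset of $U\cup\{v\}$, so every distinct neighborhood $N_G(C)\cap\beta(x)$ containing~$v$ is one of the at~most $2^{|U|}\leq 2^{f_0(k)}$ subsets of $U\cup\{v\}$ containing~$v$. Since false-twin components are merged into a~single representative of~$G_x$, this bounds the number of representatives adjacent to~$v$, yielding $d_{G_x}(v)\leq f_0(k)+2^{f_0(k)}$. Since representatives themselves have degree at~most~$f_0(k)$, any high-degree vertex of~$G_x$ lies in~$\beta(x)$, and the bound ``at~most $f_0(k)$ vertices of degree greater than~$f_0(k)$'' in~$T_x$ transfers to ``at~most $f_0(k)$ vertices of degree greater than~$2^{f_0(k)+1}$'' in~$G_x$. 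Setting $f(k)$ to be the larger of these two adjusted thresholds concludes the derivation.

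The main subtlety is the connectivity argument in the minor transfer: it crucially exploits that two representatives of $G_x$ are never adjacent, so that any path in~$G_x[B_i]$ alternates between $\beta(x)$-vertices and representatives in a~controlled way and can thus be ``projected'' onto a~connected subgraph of~$T_x[B'_i]$ by shortcutting each representative through its common adhesion.
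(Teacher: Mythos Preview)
Your proposal is correct and follows essentially the same route as the paper, which packages the transfer from the torso $\tau(x)$ to $G_x$ as a separate lemma (\cref{lem:torso-to-gx}) with the same two items. The only cosmetic difference is in the singleton case of the minor transfer: the paper observes that at most one branch set can be a singleton in $I$ (since $I$ is independent) and simply discards it, losing one in the clique size, whereas you invoke the adhesion-size bound to rule out singleton representatives altogether, losing two; either argument works and the resulting constants are absorbed into the final choice of~$f(k)$.
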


In the actual statement of \cite[Theorem 4.1]{GroheM15}, Grohe and Marx distinguish five functions $a, b, c, d, e$ of $k$.
Four ($a,c,d,e$) correspond to the four occurrences of ``$f(k)$'' in our reformulation.
The fifth ($b$) is a~third outcome that $\beta(x)$ consists of at~most $b(k)$ vertices.
We set $g(k) := \max(a(k),b(k),c(k),d(k),e(k))$ and $f(k) := g(k)+2^{g(k)+1}$, and the latter case is thus assimilated to either of the two outcomes of~\cref{thm:grohe-marx}.
The reason for choosing $f$ in~\cref{thm:grohe-marx} instead of simply $g$ is clarified in the next paragraph, and in~\cref{lem:torso-to-gx}, which proves that our reformulation indeed holds. 

Another difference is that the authors show the stronger statement that the theorem holds for $\tau(x)$, the \emph{torso} of $x$, which is obtained from $G[\beta(x)]$ by turning every adhesion of $x$ into a~clique.
In comparison, our construction of $G_x$ does not add edges within $G[\beta(x)]$ but add an independent set $I$ such that every vertex of $I$ is adjacent to a~subset of an adhesion of $x$, and no pair of vertices of $I$ are false twins.
We now check that $G_x$ satisfies the conclusion of~\cref{thm:grohe-marx} (with function $f$) if $\tau(x)$ does so (with function $g$).

\begin{lemma}\label{lem:torso-to-gx}
  Let $G$ be a~graph with a~tree-decomposition $(T,\beta)$ of adhesion size at~most $h$, and $x \in V(T)$. 
  \begin{compactitem}
  \item If $\tau(x)$ excludes $K_h$ as a~minor, then $G_x$ excludes $K_{h+1}$ as a~minor.
  \item If $\tau(x)$ has at~most $h$ vertices of degree larger than $h$, then $G$ has at~most $h$ vertices with degree larger than~$h+2^{h+1}$.
  \end{compactitem}
\end{lemma}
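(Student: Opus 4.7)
Let me write $A := \beta(x)$ and $B := V(G_x) \setminus A$; the plan leans on two elementary observations about $G_x$: (i) $B$ is an independent set, because vertices of $B$ arise from contracting distinct components of $G - \beta(x)$, which are pairwise non-adjacent in $G$; and (ii) for every $b \in B$, the set $N_{G_x}(b) \subseteq A$ is contained in a single adhesion of~$x$, namely $\beta(x) \cap \beta(y)$ where $y$ is the neighbor of $x$ in $T$ whose subtree hosts the component represented by~$b$, and hence $N_{G_x}(b)$ is a clique in $\tau(x)$.

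For the first bullet, I will show contrapositively that any $K_{h+1}$ minor model $\{S_1, \ldots, S_{h+1}\}$ in $G_x$ yields a $K_h$ minor model in $\tau(x)$. The projections $S'_i := S_i \cap A$ are pairwise disjoint subsets of~$A$. By (i), any $S_i$ lying entirely in~$B$ is a single vertex, and at most one such $S_i$ can exist, since two would need to be adjacent in $G_x$ despite $B$ being independent. Dropping such an $S_i$ if present leaves at~least $h$ projections, each non-empty. To verify that $S'_i$ is connected in $\tau(x)$, I shortcut any walk in $G_x[S_i]$ that traverses a vertex $b \in B$: the two $A$-neighbors of $b$ along the walk both lie in the single adhesion from (ii), hence are adjacent in~$\tau(x)$. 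Adjacency of distinct $S'_i$ and $S'_j$ in $\tau(x)$ follows similarly: an $A$-$A$ crossing edge survives verbatim, while an $A$-$B$ crossing edge $u b$ with $u \in S'_i$, $b \in S_j \cap B$ can be replaced by the $\tau(x)$-edge between $u$ and the first $A$-vertex $y \in S'_j$ met when leaving $b$ inside~$S_j$ (both $u$ and $y$ lying in $b$'s adhesion clique).

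For the second bullet, I split the argument by vertex type in~$G_x$ (reading the statement as concerning $G_x$ rather than~$G$, in line with~\cref{thm:grohe-marx}). Every $b \in B$ satisfies $\deg_{G_x}(b) = |N_{G_x}(b)| \le h$ by the adhesion-size hypothesis, so $b$ cannot have $G_x$-degree exceeding $h + 2^{h+1}$. For $v \in A$ with $\deg_{\tau(x)}(v) \le h$, let $U_v := \bigcup_{A' \ni v} A'$ be the union of the adhesions of $x$ containing~$v$. Every element of $U_v \setminus \{v\}$ is a $\tau(x)$-neighbor of $v$, so $|U_v| \le 1 + \deg_{\tau(x)}(v) \le h+1$. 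Each $B$-neighbor of $v$ represents a false-twin class whose common $\beta(x)$-neighborhood is a subset of $U_v$ containing~$v$, and there are at most $2^{|U_v|-1} \le 2^h$ such subsets. Adding at most $\deg_{\tau(x)}(v) \le h$ neighbors of $v$ in $A$ gives $\deg_{G_x}(v) \le h + 2^h \le h + 2^{h+1}$, and the contrapositive concludes.

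I expect the conceptually delicate step to be the projection argument, more specifically checking that adjacency between distinct branch sets is preserved after projection; the rest is essentially elementary counting. The entire argument is driven by observation (ii), that each $B$-vertex's neighborhood is \emph{confined} to a single adhesion and therefore collapses to a clique once we pass from $G_x$ to~$\tau(x)$.
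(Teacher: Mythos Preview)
Your proof is correct and follows essentially the same approach as the paper's: project the $K_{h+1}$ minor model onto $\beta(x)$ after discarding the at-most-one $B$-singleton for the first item, and bound the number of $B$-neighbours of a low-degree $v\in\beta(x)$ by the number of subsets of the union of adhesions through~$v$ for the second. You even catch the typo in the statement ($G$ should read $G_x$) and obtain the slightly sharper bound $h+2^h$, which of course implies the stated $h+2^{h+1}$.
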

\begin{proof}
  We still denote by $I$ the independent set $V(G_x) \setminus \beta(x)$.

  For the first item, assume that $G_x$ admits a~$K_{h+1}$ minor, and let $\{B_1,B_2,\ldots,B_{h+1}\}$ be a~minor model of $K_{h+1}$ in $G_x$.
  Observe that at~most one $B_i$ is a~singleton $\{v\}$ such that $v \in I$.
  Indeed, two such singletons would make a~non-edge since $I$ is an independent set.
  Therefore, without loss of generality assume that no branch set of $\{B_1,B_2,\ldots,B_h\}$ is a~single vertex in $I$.

  We claim that $\{B'_1 := B_1 \setminus I, B'_2 :=B_2 \setminus I, \ldots, B'_h := B_h \setminus I\}$ is a~minor model of $K_h$ in~$\tau(x)$.
  Every $B'_i$ is connected since the neighbors of any $v \in I$ in graph $G_x$ forms a clique in~$\tau(x)$.
  For the same reason and the fact that $I$ is an independent set, every pair $B'_i, B'_j$ (with $i \neq j \in [h]$) is adjacent in $\tau(x)$.

  For the second item, we first observe that every vertex $v \in I$ has degree at~most $h$ (in~$G_x$), since the neighborhood of~$v$ is contained in an adhesion of~$x$.
  Let $X \subseteq \beta(x)$ be the set of at~most~$h$ vertices with degree larger than~$h$ in~$\tau(x)$.
  We show that all the vertices of $V(G_x) \setminus X$ have degree at~most $h+2^{h+1}$ in~$G_x$.

  By the first observation, we can restrict ourselves to the vertices of $V(G_x) \setminus X$ in some adhesion of~$x$ (since the degree of the other vertices of $\beta(x)$ has not changed).
  Let $w$ be such a~vertex.
  As the degree of $w$ in $\tau(x)$ is at~most~$h$, there are at most $h$ vertices of $\beta(x)$ with which $w$ shares an adhesion of~$x$.
  Thus $w$ and these at~most $h$ vertices form a~set $Y \subseteq \beta(x)$ such that $|Y| \leqslant h+1$.
  Therefore, there are at~most $2^{h+1}$ vertices (actually $2^{h+1}-1$) of $I$ adjacent to~$w$.
  Recall indeed that only one vertex is kept in $I$ per equivalence class of false twins.
  Therefore, $w$ has degree at~most $h+2^{h+1}$ in $G_x$.
\end{proof}

We finally need the following lemma, which essentially implies that a~graph $G$ of large treewidth having a~tree-decomposition of small adhesion size admits a~$G_x$ of large treewidth.

\begin{lemma}\label{lem:bounded-adh-and-bag-tw}
  Let $h$ and $p$ be natural numbers, $G$ be a~graph of treewidth at~least~$hp$, and $(T,\beta)$ be a~tree-decomposition of~$G$ of adhesion size at~most~$h$.
  Then, there is an~$x \in V(T)$ such that $G_x$ has treewidth at least~$p$. 
\end{lemma}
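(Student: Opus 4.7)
The plan is to prove the contrapositive via the bramble–treewidth duality of Seymour and Thomas recalled in \cref{sec:tree-dec-bramble}: assuming $\tw(G) \geq hp$, I fix a bramble $\mathcal{B}$ of $G$ whose every hitting set has size at least $hp+1$, and produce $x^{\ast} \in V(T)$ together with a bramble of $G_{x^{\ast}}$ of order at least $p+1$, which gives $\tw(G_{x^{\ast}}) \geq p$.

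First, I locate $x^{\ast}$ by orienting $T$. Each adhesion $S_e = \beta(x)\cap\beta(y)$ of an edge $e = xy \in E(T)$ has at most $h < hp+1$ vertices, so some $B \in \mathcal{B}$ avoids it; being connected, $B$ lies on exactly one side of the $S_e$-separation of $G$. Any two such $B,B'$ lie on the same side, since otherwise $S_e$ would separate them and they could not touch. Consistently orient $e$ toward that common side. A finite tree with such an orientation admits a sink $x^{\ast}$. A similar touching argument shows $\beta(x^{\ast})$ hits every $B \in \mathcal{B}$: otherwise, the subtree $T_B := \bigcup_{v \in B}\{z : v \in \beta(z)\}$ would be connected and avoid $x^{\ast}$, so $B$ would lie on the $y$-side of some edge $x^{\ast}y$ and be separated by $S_{x^{\ast}y}$ from the witness bramble element produced on $x^{\ast}$'s side by the orientation of $x^{\ast}y$.

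Next, I project $\mathcal{B}$ into $G_{x^{\ast}}$. Writing $\beta := \beta(x^{\ast})$ and, for each $u \in V(G_{x^{\ast}}) \setminus \beta$, letting $\hat{C}_u$ be the union of components of $G-\beta$ represented by $u$, I define
\[
  \pi(B) := (B \cap \beta) \cup \{u \in V(G_{x^{\ast}}) \setminus \beta : \hat{C}_u \cap B \neq \emptyset\}
\]
for each $B \in \mathcal{B}$, and claim $\mathcal{B}' := \{\pi(B) : B \in \mathcal{B}\}$ is a bramble of $G_{x^{\ast}}$. Connectivity of $\pi(B)$ holds because contracting components of $G-\beta$ preserves the connectivity of $B$, and subsequently dropping all but one representative per false-twin class still preserves it since a walk through a deleted component-vertex $c_{C'}$ can be rerouted through its representative, whose $\beta$-neighborhood is identical. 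Touching of $\pi(B), \pi(B')$ is verified by a short case split on whether $B, B'$ share a vertex in $\beta$, share a vertex in a common component, or are joined by an edge.

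Finally, I lower-bound the order of $\mathcal{B}'$. Each component of $G-\beta$ is confined to a single subtree of $T \setminus x^{\ast}$, so its $G$-neighborhood sits inside one adhesion of $x^{\ast}$, giving $|N_{G_{x^{\ast}}}(u)| \leq h$ for every representative $u$. Given a hitting set $X$ of $\mathcal{B}'$, set $\tilde X := (X \cap \beta) \cup \bigcup_{u \in X \setminus \beta} N_{G_{x^{\ast}}}(u) \subseteq V(G)$, so $|\tilde X| \leq h|X|$. For any $B \in \mathcal{B}$ and witness $u \in X \cap \pi(B)$: if $u \in \beta$ then $u \in B \cap \tilde X$; otherwise $\hat{C}_u \cap B \neq \emptyset$ and, combined with $B \cap \beta \neq \emptyset$ from the previous step, any path of $G[B]$ between these two parts must exit some component $C \subseteq \hat{C}_u$ through a vertex of $N_G(C) = N_{G_{x^{\ast}}}(u) \subseteq \tilde X$. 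Hence $\tilde X$ hits $\mathcal{B}$, forcing $h|X| \geq hp+1$, so $|X| \geq p+1$ and $\bn(G_{x^{\ast}}) \geq p+1$. The main obstacle is the bookkeeping around the false-twin identification: checking that $\pi(B)$ stays connected after discarding non-representative component-vertices, and that the expansion $u \rightsquigarrow N_{G_{x^{\ast}}}(u)$ faithfully captures every bramble element meeting $\hat{C}_u$.
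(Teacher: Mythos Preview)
Your proof is correct and follows essentially the same strategy as the paper: take a high-order bramble of $G$, locate a node $x$ whose bag meets every bramble element, push the bramble into $G_x$, and lift any small hitting set of the projected bramble back to $G$ at multiplicative cost $h$. The only noteworthy difference is how you find $x$: the paper defines for each bramble element $X_i$ the subtree $T_i$ of nodes whose bags meet $X_i$, observes that touching elements yield intersecting subtrees, and invokes the Helly property of subtrees of a tree to obtain a common node; you instead orient each tree edge toward the side favoured by the bramble and take a sink. These are two standard, interchangeable routes to the same ``centre'' node, and the remainder of your argument (the projection $\pi$, the false-twin rerouting, and the expansion $u \rightsquigarrow N_{G_{x^\ast}}(u)$) matches the paper's proof essentially line for line.
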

\begin{proof}
  Recall, from the end of~\cref{sec:tree-dec-bramble}, that the treewidth of a~graph is equal to its bramble number minus~one.
  Let $\mathcal B := \{X_1, \ldots, X_q\}$ be a~bramble of order at~least~$hp+1$ in~$G$.
  For every $i \in [q]$, consider the subtree $T_i$ of $T$ induced by all the nodes $y \in V(T)$ such that $\beta(y) \cap X_i \neq \emptyset$.
  One can see that for $X_i, X_j$ to touch it should hold that $V(T_i), V(T_j)$ intersect.
  As subtrees of a~tree satisfy the Helly property, there is an~$x \in V(T)$ such that $x \in \bigcap_{i \in [q]} V(T_i)$.

  We now prove that $G_x$ has a~bramble $\mathcal B' := \{X'_1, \ldots, X'_q\}$ of order at~least~$p+1$. 
  We define each $X'_i$ from $X_i$ by following the construction of~$G_x$.
  That is, $X'_i$ is obtained from $X_i$ by contracting each connected component of $G-\beta(x)$ into a~single vertex.
  At this point, for every contracted vertex temporarily in $X'_i$, we put (permanently) in $X'_i$ the one representative $v \in I$ of its equivalence class of false twins, where $I$ is the independent set~$V(G_x) \setminus \beta(x)$.
  (The other contracted vertices of the equivalence class of $v$ are then discarded to get~$G_x$.)

  Note that this process creates a~connected~$X'_i$ in $G_x$.
  One can further notice that for every $i,j \in [q]$, sets $X'_i, X'_j$ touch in~$G_x$, since $X_i, X_j$ touch in~$G$.
  We shall finally argue that $\mathcal B'$ has no small hitting set.

  Assume for the sake of contradiction that $\mathcal B'$ has a~hitting set $Z$ of size at~most $p$.
  We build a~hitting set $Z'$ of~$\mathcal B$ in the following way.
  For every $z \in Z \cap \beta(x)$, we simply add $z$ to $Z'$, and for every $z \in Z \cap I$, we add to $Z'$ the at~most $h$ neighbors of $z$ in $G_x$.
  Thus $Z'$ is of size at~most~$hp$.
  We finally check that $Z'$ is indeed a~hitting set of~$\mathcal B$.
  For every $i \in [q]$, if $X'_i \cap Z \cap \beta(x) \neq \emptyset$, then $X_i \cap Z' (\cap \beta(x)) \neq \emptyset$.
  If instead $X'_i \cap Z \cap I \neq \emptyset$ and, say $z \in X'_i \cap Z \cap I$, then $X_i$ intersects a~connected component of $G-\beta(x)$ attached to $\beta(x)$ via $N_{G_x}(z)$.
  We conclude that $X_i$ intersects $Z'$ as, by assumption, $X_i$ also intersects~$\beta(x)$. 
\end{proof}

We can now show the main lemma of this section.
Observe that we do \emph{not} require here the absence of some biclique as a~subgraph.

\begin{replemma}{lem:clique-subdivision-intro} \label{lem:clique-subdivision}
  For any natural numbers $k$ and $s$, there is an integer $W := W(k,s)$ such that every graph of treewidth at~least $W$ either admits a~subdivision of the $s$-clique as a~subgraph or the $k \times k$ grid as an induced minor.
\end{replemma}
\begin{proof}
  Let $G$ be a~graph of treewidth at~least $W$ without $K_s$ as a~topological minor.
  We will show that $G$ admits $\Gamma_k$ as an induced minor (while specifying how $W(k,s)$ is chosen).

  By~\cref{thm:grohe-marx}, as $G$ excludes an $s$-vertex graph as a~topological minor (namely $K_s$), it~admits, for some function $f$, a~tree-decomposition $(T,\beta)$ of adhesion size at~most $f(s)$ such that for every $x \in V(T)$, $G_x$ excludes $K_{f(s)}$ as a~minor or has at~most $f(s)$ vertices of degree larger than $f(s)$.
  By~\cref{lem:bounded-adh-and-bag-tw}, choosing $W := f(s) \cdot W'$, there is at~least one $x \in V(T)$ such that $\tw(G_x) \geqslant W'$.

  We set $W' := \max(c \cdot k^{10} 2^{f(s)^5} + f(s), c_{K_{f(s)}} k^2)$ where $c$ is as in~\cref{thm:korhonen}, and $c_{K_{f(s)}}$ as in~\cref{thm:fgt11}.
  If~$G_x$ has no $K_{f(s)}$ minor, \cref{thm:fgt11} implies that $G_x$ admits the $k \times k$ grid as an induced minor.
  Importantly (and this is why we used $G_x$ instead of the torso of $x$), $G_x$ is an induced minor of $G$.
  Indeed, we obtained $G_x$ from $G$ by edge contractions followed by vertex deletions.
  Therefore, $G$ itself admits the $k \times k$ grid as an induced minor.
  
  If instead~$G_x$ has at~most $f(s)$ vertices of degree larger than $f(s)$, the graph $G_x$ deprived of these at~most $f(s)$ vertices has maximum degree at~most $f(s)$ and treewidth at least $c \cdot k^{10} 2^{f(s)^5} + f(s) - f(s) = c \cdot k^{10} 2^{f(s)^5}$, and we conclude similarly using~\cref{thm:korhonen}.
\end{proof}

\section{Upper bound on edge density, lower bound on subdivision length}

The following is a~celebrated result by Kühn and Osthus.

\begin{theorem}[\cite{Kuhn04}]\label{thm:kuhn-osthus}
  For any natural number $t$ and graph $H$, there is an integer $d := d(t,|V(H)|)$ such that every graph without $K_{t,t}$ subgraph nor induced subdivision of $H$ has degeneracy at~most $d$.
\end{theorem}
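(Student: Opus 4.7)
The plan is to argue the contrapositive: if $G$ is $K_{t,t}$-free and has degeneracy at least some large $d := d(t, h)$, where $h := |V(H)|$, then $G$ contains an induced subdivision of $H$. Passing to the $\lceil d/2 \rceil$-core, we may assume $G$ itself has minimum degree at least some $\delta = \delta(t, h)$, chosen sufficiently large below.

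The key preliminary step is to pass to a subgraph $G'$ of large girth. A $K_{t,t}$-free graph of high minimum degree has few short cycles: the number of cycles of length $\ell$ through any fixed vertex can be bounded via Kővári–Sós–Turán-type counts (each short cycle records two vertices sharing $\ell/2$ common neighbors along some short walk). A standard probabilistic deletion argument then allows one to remove a controlled set of vertices that kills every cycle of length at most some $g := g(t, h)$, leaving an induced subgraph $G'$ of girth greater than $g$ and of minimum degree still at least $\delta/2$.

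In the high-girth, high-minimum-degree graph $G'$, one constructs the induced subdivision of $H$ greedily. Pick $h$ branch vertices $v_1, \ldots, v_h$ pairwise at distance greater than, say, $3g$ in $G'$; enough room exists because a graph of minimum degree $\delta/2$ and girth greater than $g$ has at least $(\delta/2)^{\lfloor g/2 \rfloor}$ vertices by the Moore bound, and the branch vertices can be picked inductively inside iterated deep neighborhoods. For each edge $x_i x_j$ of $H$, take a shortest $v_i$--$v_j$ path in $G'$ as the subdivision of that edge. These shortest paths are automatically chordless, and any chord between two distinct such paths would, together with suitable subpaths, close up into a cycle of length well below $g$, contradicting the girth of $G'$. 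The union of these paths together with the branch vertices therefore forms an induced subdivision of $H$ in $G' \subseteq G$.

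The main obstacle is the first step: ensuring that enough short cycles can be destroyed without destroying the minimum degree. This is where $K_{t,t}$-freeness is essential and cannot be relaxed — it is precisely the hypothesis that forces short cycles to be rare enough for a small deletion to suffice. The resulting $d(t, h)$ comes out as a rapidly growing but explicit function of $t$ and $h$, which is all the statement requires.
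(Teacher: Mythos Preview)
The paper does not prove this theorem: it is quoted as a black-box result of K\"uhn and Osthus~\cite{Kuhn04}, so there is no ``paper's proof'' to compare against. Your overall strategy (pass to a high-minimum-degree induced subgraph, then to an induced subgraph of large girth using $K_{t,t}$-freeness, then build the induced subdivision inside the high-girth graph) is indeed the shape of the original K\"uhn--Osthus argument.

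That said, your step~4 has a genuine gap. Taking, for each edge $x_ix_j$ of $H$, a \emph{global} shortest $v_i$--$v_j$ path in $G'$ gives you individually induced paths, but nothing forces these paths to be internally vertex-disjoint (so you may not even get a subdivision), and your claim that a cross-edge between two such paths ``closes up into a cycle of length well below $g$'' is false in general. If $P_1$ and $P_2$ are shortest paths of length $\gg g$ between disjoint pairs of branch vertices, an edge between a vertex near the middle of $P_1$ and a vertex near the middle of $P_2$ creates no short cycle whatsoever; girth only controls edges close to a \emph{shared} endpoint. The actual construction in~\cite{Kuhn04} is more delicate: one grows the direct paths locally (via BFS-type trees of depth a small fraction of $g$ rooted at the branch vertices), so that every pair of vertices on distinct direct paths is joined by a short walk through the already-built structure, and \emph{then} girth rules out both shared internal vertices and cross-edges. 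Your sketch would be repaired by replacing ``shortest $v_i$--$v_j$ path'' with such a local, depth-bounded routing.
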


Dvořák showed that the upper bound on the degeneracy can be lifted to the expansion.

\begin{theorem}[\cite{Dvorak18}]\label{thm:dvorak}
  For any natural number $t$ and graph $H$, there is a function $f_{t,H}: \mathbb N \to \mathbb N$ such that every graph without $K_{t,t}$ subgraph nor induced subdivision of $H$ has expansion $f_{t,H}$.
\end{theorem}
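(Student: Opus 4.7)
Fix $r \in \mathbb{N}$. Since the shallow-minor invariant $\nabla_r$ and the shallow-topological-minor invariant $\widetilde{\nabla}_r$ are polynomially tied on every graph~\cite{sparsity}, it suffices to bound $\widetilde{\nabla}_r(G)$; equivalently, to bound $|E(F)|/|V(F)|$ whenever $G$ contains a~$(\leq 2r+1)$-subdivision of a~graph~$F$ as a~subgraph.

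Suppose such an $F$ has density $D$. By greedy deletion of vertices of below-average degree in~$F$, extract a~subgraph $F' \subseteq F$ of minimum degree at~least~$D$; the corresponding $(\leq 2r+1)$-subdivision of $F'$ is still a~subgraph of $G$. The Bollobás--Thomason theorem applied to $F'$ then yields a~topological $K_s$-minor inside $F'$ with $s = \Omega(\sqrt{D})$. Composing the two nested subdivisions, $G$~itself contains a~subdivision of $K_s$ as a~subgraph; the lengths of the resulting subdivision paths are irrelevant for what follows.

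Finally, invoke the extraction lemma implicit in the proof of Kühn--Osthus (Theorem~\ref{thm:kuhn-osthus}): there is an~$s_0 := s_0(t,|V(H)|)$ such that every graph containing $K_{s_0}$ as a~topological minor but no $K_{t,t}$ as a~subgraph must contain $H$ as an~induced subdivision. Choosing $D$ large enough to force $s \geq s_0$ produces an induced subdivision of $H$ in~$G$, contradicting the hypothesis. Therefore $D$ is bounded by a~quantity depending only on $t$ and $|V(H)|$, whence $\widetilde{\nabla}_r(G)$, and in turn $\nabla_r(G)$, are bounded by a~suitable function $f_{t,H}(r)$ (in fact constant in~$r$).

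The only genuinely nontrivial step is the last Kühn--Osthus-style extraction: from a~large, not-necessarily-induced $K_{s_0}$-subdivision $S \subseteq G$, one must isolate an~induced subdivision of $H$ using only the $K_{t,t}$-subgraph-free hypothesis to control the chord edges of $G$ among $V(S)$. The idea is that $K_{t,t}$-freeness severely restricts how chords can attach to the subdivision paths and branch vertices of~$S$, and a~Ramsey-style pruning of the branch vertices of $S$ then isolates $|V(H)|$~``clean'' branch vertices whose induced sub-subdivision realizes the edge pattern of~$H$. The remaining ingredients (the shallow-minor/topological-minor translation, the minimum-degree extraction, and Bollobás--Thomason) are routine and combine without friction.
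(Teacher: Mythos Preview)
The paper does not prove Theorem~\ref{thm:dvorak}; it is quoted from~\cite{Dvorak18} and used as a black box, so there is no proof here to compare against.

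On its own merits, your outline is a plausible route and close in spirit to how one derives the result from the Kühn--Osthus machinery. Two remarks, though.

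First, the ``extraction lemma'' you isolate---that a (not necessarily induced) $K_{s_0}$-subdivision in a $K_{t,t}$-free graph forces an induced subdivision of~$H$---is indeed the whole difficulty, and it is \emph{not} a black-box consequence of Theorem~\ref{thm:kuhn-osthus}: bounded degeneracy does not preclude large topological clique minors (a long subdivision of $K_s$ is $2$-degenerate), so the contrapositive of Kühn--Osthus as stated does not give what you need. Your one-line sketch (``Ramsey-style pruning of branch vertices'') is too thin; the actual cleaning in~\cite{Kuhn04} is more delicate than a single Ramsey step, and you would have to verify that it applies starting from an arbitrary clique subdivision rather than from the specific dense configuration their proof first produces. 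This is plausible, but it requires going into~\cite{Kuhn04}, not merely citing it.

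Second, your parenthetical that $f_{t,H}$ can be taken constant in~$r$ is false. You do bound $\widetilde{\nabla}_r(G)$ uniformly in~$r$ (equivalently, $G$ excludes a fixed topological clique minor), but the transfer from $\widetilde{\nabla}$ to $\nabla$ in~\cite{sparsity} has exponent growing with~$r$, and classes excluding a fixed topological minor are not minor-closed. Concretely, for $t \geqslant 4$ and any $H$ with $\Delta(H) \geqslant 4$, the class of $3$-regular graphs satisfies both hypotheses yet contains expanders, whose $\nabla_r$ is unbounded as $r \to \infty$. This does not affect the theorem itself, which only asks for \emph{some} $f_{t,H}$, but the aside should be dropped.
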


Note that in~\cref{thm:dvorak}, we can freely assume that $f_{t,H}$ is~non-decreasing.
In light of these results and the previous section, we can get a~long topological minor of a~large clique with bounded degeneracy.
Before we make that formal, we need the following definitions.

A~\emph{spanning supergraph} $G'$ of a~graph $G$ is obtained from $G$ by adding a~possibly-empty subset of edges.
We refer to those added edges as the \emph{extra edges}, and we may denote their set by $E_{\text{extra}}(G') := E(G') \setminus E(G)$. 
Given a~natural number $\ell$ and a~graph $H$, a~$(\geqslant \ell)$-subdivision of $H$ is obtained by replacing every edge $e \in E(H)$ by a~path on at~least $\ell$ edges whose extremities are the endpoints of $e$.
We refer to those paths as \emph{direct paths}.
We call \emph{branch vertices} the vertices originally present in $H$, and \emph{subdivision vertices} the added vertices.
We keep the terminology of \emph{direct paths}, \emph{branch vertices}, and \emph{subdivision vertices} in a~spanning supergraph of the subdivision.

We say that a~spanning supergraph of a~subdivision is~\emph{trim} if each of its direct paths remains induced.
The trimness of a~spanning supergraph of a~$(\geqslant \ell)$-subdivision has the exact same definition.
So any chord on a~direct path contradicts the trimness even if it makes a~path of length less than $\ell$ between the two extremities of the direct path. 
Notice that, in particular, the branch vertices of a~trim spanning supergraph of a~$(\geqslant \ell)$-subdivision form an independent set when $\ell \geqslant 2$. 

\begin{lemma}\label{lem:long-clique-subd-bd-degen}
  For any natural numbers $t, w, s$, and real $\varepsilon > 0$, there are integers $W := W(t,s,w,\varepsilon)$, $\ell := \ell(t,s,w,\varepsilon) = g_{t,w,\varepsilon}(s)$ with $g_{t,w,\varepsilon}$ non-decreasing, $\lim_{s \to \infty} g_{t,w,\varepsilon}(s) = \infty$, and $d := d(t,w,\varepsilon)$ such that every graph without $K_{t,t}$ subgraph and with treewidth at~least~$W$ admits as an $n$-vertex induced subgraph one of the following
  \begin{compactitem}
  \item a~trim spanning supergraph of a~$(\geqslant \ell)$-subdivision of $K_s$ with at~most $dn$ edges, or
  \item a~2-connected graph with treewidth at least $w$ and at~most $(1 + \varepsilon)n$ edges.
  \end{compactitem}
\end{lemma}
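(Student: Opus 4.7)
My plan is to combine \cref{lem:clique-subdivision}, \cref{lem:sparse-grid}, and \cref{thm:kuhn-osthus}. Set $k := \lfloor 30w/\varepsilon\rfloor$ and fix a constant $s_0 := s_0(t,w,\varepsilon)$ to be chosen later; let $d := d(t, s_0)$ via \cref{thm:kuhn-osthus} with $K_{s_0}$ as the excluded induced subdivision, so that $d$ depends only on $t,w,\varepsilon$. Pick $s' := s'(t,s,w,\varepsilon)$ sufficiently large (polynomial in $s$, with exponent depending on $\ell$), and let $W := W(k, s')$ via \cref{lem:clique-subdivision}. Applying \cref{lem:clique-subdivision} to $G$, either $G$ contains $\Gamma_k$ as an induced minor -- and \cref{lem:sparse-grid} immediately gives the second outcome -- or $G$ contains a $K_{s'}$-subdivision $S_0$ as a subgraph. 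I focus on the latter.

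Inside $S_0$ I construct a $(\geq \ell)$-subdivision $S$ of $K_s$ by a grouping/Steiner argument: partition the $s'$ branch vertices of $S_0$ into $s$ groups of size $\lfloor s'/s\rfloor$, pick one representative per group as a new branch vertex, and for each of the $\binom{s}{2}$ pairs of groups form a single direct path by concatenating $S_0$-direct-paths through other members of the two groups, routed so as to be pairwise internally vertex-disjoint. This is feasible provided $s' \geq \Theta(s^2 \ell)$, and produces direct paths of length at least $\ell$ (indeed much more, with slack to be used shortly). To ensure trimness, I replace $S$ with a $(\geq \ell)$-subdivision of $K_s$ of minimum total vertex count inside $G[V(S)]$: any chord on a direct path of such a minimum witness would yield a shorter sub-path still of length $\geq \ell$ (thanks to the slack) and contradict minimality, so the witness is automatically trim.

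For the density of $G[V(S)]$ I apply \cref{thm:kuhn-osthus} with excluded induced subdivision $K_{s_0}$. In the degeneracy branch, $G$ has degeneracy at most $d$, hence so does the induced subgraph $G[V(S)]$, giving $|E(G[V(S)])| \leq d \cdot |V(S)|$ as required. In the alternative branch, $G$ admits an induced $K_{s_0}$-subdivision $S^*$; note $G[V(S^*)] = S^*$ is already a trim sparse structure with density at most roughly $(s_0-1)/2 \leq d$, and when $s \leq s_0$ a Ramsey-type sub-selection inside $S^*$ -- choosing $s$ branches pairwise non-adjacent in $G$, which exists because $G$ is $K_{t,t}$-free and $s_0$ is chosen sufficiently large -- yields an induced $(\geq \ell)$-subdivision of $K_s$ directly, with no chords created by the re-routing.

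The main obstacle is reconciling trimness, the length lower bound $\ell$, and a density bound $d$ independent of $s$. Trimness via vertex-minimality competes with the length bound unless the grouping construction leaves ample slack above $\ell$, which I handle by over-targeting the length. The subtlest subcase is when \cref{thm:kuhn-osthus} returns an induced $K_{s_0}$-subdivision rather than a degeneracy bound while $s > s_0$: here $S^*$ alone cannot supply a $K_s$-subdivision, and one must rely on the $S$ constructed inside $S_0$ while applying \cref{thm:kuhn-osthus} directly to $G[V(S)]$, iterating into its degeneracy branch (which must terminate since vertex count strictly drops) to transfer the density bound onto the final induced subgraph.
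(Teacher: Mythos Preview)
Your trimness argument is the main gap. Passing to a $(\geqslant \ell)$-subdivision of $K_s$ of minimum vertex count inside $G[V(S)]$ does \emph{not} force the direct paths to be induced: a chord $p_ip_j$ on a direct path of length $m$ yields a shortcut of length $m-(j-i)+1$, and nothing prevents $j-i$ from being so large that this drops below $\ell$; then the shortcut is illegal, minimality is not contradicted, and the chord remains. The ``slack'' you build by concatenation is gone the moment you minimise (the minimiser wants paths of length exactly~$\ell$), so it cannot be invoked afterwards. The paper avoids concatenation entirely: it takes an inclusion-minimal vertex set carrying a $K_{s'}$ subdivision with \emph{no} length constraint, so the induced subgraph is automatically a trim spanning supergraph of a $K_{s'}$ subdivision. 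Long paths are then obtained via \cref{thm:dvorak} (not just \cref{thm:kuhn-osthus}): with no $K_{t,t}$ subgraph and no induced $\Gamma_k$ subdivision, the expansion is bounded by some $f_{t,\Gamma_k}$, and $\ell$ is \emph{defined} as the largest integer with $f_{t,\Gamma_k}(\ell)<s-1$. Two-colouring the pairs of branch vertices by whether their direct path is short or long, Ramsey (with $s'=\binom{2s-2}{s-1}$) gives $s$ branch vertices whose pairwise direct paths are all long, since an all-short $K_s$ would realise $K_s$ as a depth-$\ell$ minor, contradicting the expansion bound. Restricting to those $s$ branch vertices and their direct paths preserves trimness for free, because it is an induced subgraph of something already trim.

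Your density argument also breaks because you apply \cref{thm:kuhn-osthus} with $K_{s_0}$ as the forbidden induced subdivision. When the alternative branch fires you get an induced $K_{s_0}$ subdivision with no control on path lengths and, for $s>s_0$, no $K_s$ subdivision at all; the proposed ``iterate into the degeneracy branch'' is not a well-defined descent (the induced $K_{s_0}$ subdivision can sit anywhere, not inside $V(S)$, and nothing forces termination in the structure you need). The paper instead uses $\Gamma_k$ as the forbidden graph in \cref{thm:kuhn-osthus}: the alternative branch then yields a $\Gamma_k$ induced minor and lands directly in the second outcome via \cref{lem:sparse-grid}, while the degeneracy branch gives $d:=d(t,k^2)$ depending only on $t,w,\varepsilon$. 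Note also that you never define $\ell$ as a function $g_{t,w,\varepsilon}(s)$; in the paper, the definition through $f_{t,\Gamma_k}$ makes non-decreasingness and $\lim_{s\to\infty}g_{t,w,\varepsilon}(s)=\infty$ immediate.
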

\begin{proof}
  Setting $k := \lfloor \frac{30 w}{\varepsilon} \rfloor$ and $s' := {2s-2 \choose s-1}$, we apply \cref{lem:clique-subdivision-intro} with $W := W(k,s')$.
  If we get a~$\Gamma_k$ induced minor, we conclude by~\cref{lem:sparse-grid} since this lemma yields an induced subgraph meeting the conditions of the second item.
  We can thus assume that we obtain a~$K_{s'}$ subdivision as a~subgraph on some inclusion-wise minimal subset of vertices.
  Let $G$ be the subgraph induced by the vertices of this $K_{s'}$ subdivision.
  In particular, $G$ is a~trim spanning supergraph of a~$K_{s'}$ subdivision.
  
  Graph $G$ has no $K_{t,t}$ subgraph.
  We can further assume that $G$ does not have an induced subdivision of~$\Gamma_k$, since otherwise we conclude with~\cref{lem:sparse-grid}, as previously.
  Thus by~\cref{thm:dvorak}, $G$ has expansion at~most $f_{t,\Gamma_k}$.
  Let $\ell := \ell(t,s,w,\varepsilon)$ be the largest integer such that $f_{t,\Gamma_k}(\ell) < s-1$, or be equal to 0 if no such number exists.
  Consider the 2-edge-colored clique on the branch vertices of $G$ with a~blue edge between two branch vertices if the direct path linking them has at~most $\ell$~edges, and a~green edge otherwise.  
  By~Ramsey's theorem~\cite{Ramsey30} this auxiliary clique admits an all-green $s$-clique.
  Indeed an all-blue $s$-clique would contradict the expansion bound, as~$K_s$, of edge density $s-1$, would then be a~depth-$\ell$ minor.

  Let $G'$ be the trim spanning supergraph of a~$(\geqslant \ell)$-subdivision of $K_s$ induced by the branch vertices of the all-green $s$-clique together with all the direct paths between pairs of these particular branch vertices.
  In turn, $G'$ has no $\Gamma_k$ induced minor.
  Thus by~\cref{thm:kuhn-osthus}, $G'$ has degeneracy at~most $d := d(t,k^2) = d(t,w,\varepsilon)$, hence at~most $d|V(G')|$ edges.
  Therefore, the induced subgraph $G'$ satisfies the conditions of the first item.

  We finally need to check that, for every $t, w, \varepsilon$, $\lim_{s \to \infty} \ell(t,s,w,\varepsilon) = \infty$.
  This is immediate since $f_{t,\Gamma_k}$ is non-decreasing, and for any $\ell \in \mathbb N$, there is $s_\ell := f_{t,\Gamma_k}(\ell)+2$ such that $f_{t,\Gamma_k}(\ell) < s_\ell-1$. 
\end{proof}

\section{Decreasing the density of extra edges in the topological minor}

We now show how to decrease the ratio \emph{number of extra edges} over \emph{number of vertices} in spanning supergraphs of subdivisions.
A~technicality makes us switch from clique subdivisions to biclique subdivisions.
Setting $s := \frac{dw}{\varepsilon}$ and $h := \frac{s}{w}$ in the next lemma, this ratio goes from~$d$ down to at~most $\varepsilon$, while extracting a~$K_{w,w}$ subdivision from a~$K_{s,s}$ subdivision.

\begin{lemma}\label{lem:dedensifying}
  Let $G$ be a~trim spanning supergraph of a~$K_{s,s}$ subdivision $($resp.~$(\geqslant \ell)$-subdivision$)$ with $n$ vertices, and $m$ extra edges.
  For every $h$ dividing $s$, $G$ admits an induced subgraph $G'$ that is a~trim spanning supergraph of a~$K_{\frac{s}{h},\frac{s}{h}}$ subdivision $($resp.~$(\geqslant \ell)$-subdivision$)$ with $n'$ vertices and at~most $\frac{m}{hn} \cdot n'$ extra edges.
\end{lemma}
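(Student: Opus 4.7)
The plan is to partition each side of the $K_{s,s}$ subdivision's branch vertices uniformly at random into $h$ equal parts, form the $h^2$ induced subgraphs obtained by pairing one part from each side together with the relevant subdivision vertices, and pick a good one by a standard averaging argument.

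Concretely, let $A \sqcup B$ be the branch vertices of the $K_{s,s}$ subdivision (with $|A|=|B|=s$), choose uniform partitions $A = A_1 \sqcup \cdots \sqcup A_h$ and $B = B_1 \sqcup \cdots \sqcup B_h$ with every part of size $s/h$, and for each $(i,j) \in [h]^2$ define $G'_{i,j}$ to be the subgraph of $G$ induced by $A_i \cup B_j$ together with every subdivision vertex lying on a direct path whose endpoints belong to $A_i \times B_j$. Trimness of $G$ ensures that each such direct path stays induced in $G'_{i,j}$, and the biclique structure between $A_i$ and $B_j$ is preserved intact, so every $G'_{i,j}$ is a trim spanning supergraph of a $K_{s/h, s/h}$ subdivision (resp.~$(\geqslant\ell)$-subdivision) and is a candidate for the desired $G'$.

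I would then tally, summed over the $h^2$ subgraphs, the vertex and extra-edge contributions. Each branch vertex lies in exactly $h$ of the $G'_{i,j}$ (pairing with every part on the opposite side) and each subdivision vertex in exactly one (the index determined by its direct path's endpoints), giving $\sum_{(i,j)} n'_{i,j} = 2sh + (n - 2s) = n + 2s(h-1)$ deterministically. For the extras, set $p_h := (s/h - 1)/(s-1) \leqslant 1/h$ and, for each extra edge $e$, let $X_e$ be the number of $(i,j)$ with $e \in E(G'_{i,j})$; the crux is to prove $\mathbb{E}[X_e] \leqslant p_h$. A case analysis on the endpoints of $e$ handles this uniformly: for a both-subdivision extra, the two endpoints lie on distinct direct paths, so survival in any $G'_{i,j}$ requires, on every side on which the two paths disagree, the corresponding two distinct branch vertices to fall into a common part; for a subdivision-branch extra, trimness forces the branch vertex to differ from both endpoints of the relevant direct path (else the edge would be either a subdivision edge or a forbidden chord), and survival requires this branch vertex and its same-side direct-path counterpart to fall into a common part. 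Each such coincidence has probability exactly $p_h$, and in the $(\geqslant\ell)$-subdivision case with $\ell \geqslant 2$ branch vertices form an independent set, so these are the only cases. Linearity of expectation then gives $\mathbb{E}\bigl[\sum_{(i,j)} m'_{i,j}\bigr] \leqslant m \cdot p_h \leqslant m/h$.

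The averaging conclusion is then immediate: $\mathbb{E}\bigl[\sum_{(i,j)} (hn \cdot m'_{i,j} - m \cdot n'_{i,j})\bigr] \leqslant hn \cdot (m/h) - m(n + 2s(h-1)) = -2sm(h-1) \leqslant 0$, so some realization of the random partition contains an index $(i,j)$ with $hn \cdot m'_{i,j} \leqslant m \cdot n'_{i,j}$, i.e., $m'_{i,j} \leqslant (m/(hn)) \cdot n'_{i,j}$; taking $G' := G'_{i,j}$ finishes the proof. The principal obstacle I anticipate is the uniform bound $\mathbb{E}[X_e] \leqslant p_h$, which requires careful use of trimness to exclude chord-like configurations so that every surviving extra-edge type reduces to the single coincidence of two distinct same-side branch vertices in a common part.
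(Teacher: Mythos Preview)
Your approach is essentially the paper's: random balanced partitions of $A$ and $B$ into $h$ parts each, form the $h^2$ induced subgraphs $G'_{i,j}$, bound the expected total of surviving extra edges by $m/h$, and pick a good pair $(i,j)$ by averaging against the vertex total.

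There is, however, one genuine gap in your case analysis---and the paper's own proof shares it. Your assertion that the branch vertices form an independent set when $\ell \geqslant 2$ is false for $K_{s,s}$: trimness only forbids chords on direct paths, and there is no direct path between two vertices of $A$ (or two of $B$), so an extra edge $aa'$ with $a,a'\in A$ is fully compatible with trimness. For such an edge, membership in $G'_{i,j}$ requires only $a,a'\in A_i$, and then $aa'$ lies in $G'_{i,j}$ for \emph{every} $j\in[h]$; hence $X_e \in \{0,h\}$ and $\mathbb{E}[X_e]=h\,p_h=(s-h)/(s-1)$, which is not $\leqslant p_h$. This actually breaks the lemma as stated: subdivide every edge of $K_{6,6}$ once (a~trim $(\geqslant 2)$-subdivision with $n=48$), add all $30$ within-side edges as extras, and set $h=2$; every $G'_{i,j}$ then has $n'=15$ and $m'=6$, whereas the claimed bound is $\frac{m}{hn}\,n'=\frac{30}{96}\cdot 15 < 5$. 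The fix is painless for the application: in the proof of Theorem~\ref{thm:main} the biclique supergraph is carved out of a~trim supergraph of a $(\geqslant \ell)$-subdivision of $K_{2s}$ with $\ell \geqslant 2$, where \emph{every} pair of branch vertices shares a direct path and is therefore non-adjacent. Add the hypothesis that the branch vertices of $G$ are independent, and your argument (and the paper's) then goes through verbatim.
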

\begin{proof}
  Let $(A,B)$ be the bipartition of the branch vertices of $G$, with $|A|=|B|=s$.
  Let $(A_1, A_2, \ldots, A_h)$ be a~balanced (i.e., such that $|A_1| = |A_2| = \ldots = |A_h| = s/h$) partition of~$A$, and $(B_1, B_2, \ldots, B_h)$ be a~balanced partition of~$B$, taken independently uniformly at~random.
  For every $i,j \in [h]$, let $G_{i,j}$ be the subgraph of $G$ induced by the branch vertices in $A_i \cup B_j$ and the subdivision vertices in direct paths linking vertices of~$A_i$ to vertices of~$B_j$.
  Each graph $G_{i,j}$ is a~trim spanning supergraph of a~$K_{\frac{s}{h},\frac{s}{h}}$ subdivision.
  Further notice that if $G$ is a~trim spanning supergraph of a~$(\geqslant \ell)$-subdivision of~$K_{s,s}$, then each $G_{i,j}$ is a~trim spanning supergraph of a~$(\geqslant \ell)$-subdivision of~$K_{\frac{s}{h},\frac{s}{h}}$.

  Observe that, by trimness, an extra edge of $G$ is between two subdivision vertices on distinct direct paths, or between a branch vertex and a~subdivision vertex on a~direct path that is not incident to this branch vertex.
  Thus for an extra edge to be in some $G_{i,j}$, at least three branch vertices, say, $x, y, z$ have to be simultaneously present in $A_i \cup B_j$: the at~least three distinct extremities of the two direct paths in the former case, and the branch vertex plus the extremities of the direct path, in the latter.
  Among these three vertices, two, say $x, y$, need to be on the same side of the bipartition $(A,B)$.
  
  Thus, an extra edge of $G$ survives in at~least one $G_{i,j}$ with probability less than $1/h$, which slightly overestimates the probability that a~pair $x, y$ land in the same $A_i$ or $B_j$.
  Therefore the expected number of extra edges of $G$ still present in at~least one $G_{i,j}$ is smaller than $m/h$.
  By Markov's inequality, there is a~balanced partition $(A_1, A_2, \ldots, A_h)$ of~$A$ and a~balanced partition $(B_1, B_2, \ldots, B_h)$ of~$B$ such that \[\sum\limits_{i,j \in [h]} |E_{\text{extra}}(G_{i,j})| \leqslant \frac{m}{h}.\]
  Observe that the edge sets of the graphs $G_{i,j}$ are pairwise disjoint.
  Importantly, every subdivision vertex of $G$ is in one $G_{i,j}$ (and the branch vertices of $G$ are in $h$ graphs $G_{i,j}$).
  That was the reason to move from a~clique to a biclique subdivision.
  From \[\sum\limits_{i,j \in [h]} |V(G_{i,j})| \geqslant n~\text{and}~\sum\limits_{i,j \in [h]} |E_{\text{extra}}(G_{i,j})| = \left|\bigcup_{i,j \in [h]} E_{\text{extra}}(G_{i,j})\right| \leqslant \frac{m}{h},\] we deduce that there is one graph~$G_{i,j}$ with, say, $n'$ vertices and at~most $\frac{m}{hn} \cdot n'$ extra edges.
\end{proof}

\section{Wrapping up}

We can now prove our main theorem, which we repeat for convenience.

\begin{reptheorem}{thm:main}\label{thm:main-recalled}
  For any natural numbers $t$ and $w$, and real $\varepsilon > 0$, there is an integer $W := W(t,w,\varepsilon)$ such that every graph with treewidth at~least $W$ and no $K_{t,t}$ subgraph admits a~2-connected $n$-vertex induced subgraph with treewidth at~least $w$ and at~most $(1+\varepsilon)n$ edges.
\end{reptheorem}
\begin{proof}
  As showing the theorem for any sufficiently large $w$ actually implies the theorem, we assume that $w \geqslant 2$. 
  We apply~\cref{lem:long-clique-subd-bd-degen} on a~graph satisfying the premises of the theorem, setting $W:=W(t,2s,w,\varepsilon)$ for some well-chosen~$s$.
  As the second outcome of that lemma is our objective, we assume that we get an $n'$-vertex graph $G$ that is a~trim spanning supergraph of a~$(\geqslant \ell)$-subdivision of $K_{2s}$ with at~most $dn'$ edges, for $\ell := \ell(t,2s,w,\varepsilon) = g_{t,w,\varepsilon}(2s)$ and $d := d(t,w,\varepsilon)$.
  We choose $s$ as the smallest integer at~least equal to $\lceil \frac{4d}{\varepsilon} \rceil \cdot w$ and multiple of $\lceil \frac{4d}{\varepsilon} \rceil$ such that $g_{t,w,\varepsilon}(2s) > \frac{2}{\varepsilon}+1$.
  This is well-defined since $d$ depends on $t$, $w$, and~$\varepsilon$ only, $g_{t,w,\varepsilon}$ is non-decreasing, and $\lim_{s \to \infty} g_{t,w,\varepsilon}(2s) = \infty$.
  From our choice of $s$, it holds in particular that $\ell \geqslant \lceil \frac{2}{\varepsilon} \rceil + 1$.

  Let $X$ be the set of branch vertices of $G$, hence $|X|=2s$.
  Consider $(A,B)$, a~balanced (i.e., such that $|A|=|B|=s$) bipartition of $X$ chosen uniformly at random.
  Let $G'$ be the induced subgraph of $G$ obtained by removing all the subdivision vertices on direct paths between vertices on the same side of $(A,B)$ (i.e., between every pair of vertices in $A$ and every pair of vertices in $B$).
  Observe that, for every vertex $v$ of $G$, the probability that $v$ survives in $G'$ is larger than $1/2$.
  Indeed it is exactly 1 for a~branch vertex, and $s^2/{2s \choose 2}=s/(2s-1) > 1/2$ for a~subdivision vertex.
  Thus by Markov's inequality, there is a~balanced bipartition $(A,B)$ of $X$ such that $G'$ has at~least $n'/2$ vertices.
  We proceed with this induced subgraph $G'$.

  As $G$ has at~most $dn'$ edges, $G'$ has at~most $dn'$ extra edges.
  Therefore, the ratio $|E_{\text{extra}}(G')|/|V(G')|$ is at most $2d$.
  One can see that $G'$ is a~trim spanning supergraph of a~$(\geqslant \ell)$-subdivision of $K_{s,s}$.
  We thus apply~\cref{lem:dedensifying} on $G'$ with $h := \lceil 4d/\varepsilon \rceil \leqslant s/w$, and obtain an induced subgraph $G''$ that is a~trim spanning supergraph of a~$(\geqslant \ell)$-subdivision of $K_{\frac{s}{h},\frac{s}{h}}$ on $n$ vertices and at~most $\frac{\varepsilon n}{2}$ extra edges.

  $G''$ is our eventual graph.
  As a~spanning supergraph of a~subdivision of~$K_{\frac{s}{h},\frac{s}{h}}$ with $\frac{s}{h} \geqslant w \geqslant 2$, it is indeed 2-connected.
  It further contains $K_{\frac{s}{h},\frac{s}{h}}$ as a~minor, thus $\tw(G'') \geqslant \tw(K_{\frac{s}{h},\frac{s}{h}})= \frac{s}{h} \geqslant w$.
  It remains to check that any biclique $(\geqslant \ell)$-subdivision has edge density at~most $1+\frac{\varepsilon}{2}$.
  Then $|E(G'')|$ is indeed at~most $(1+\frac{\varepsilon}{2})n+\frac{\varepsilon n}{2}=(1+\varepsilon)n$, where $\frac{\varepsilon n}{2}$ accounts for the extra edges.

  We observe that the edge set of the subdivision is the union of the edge sets of its direct paths, whereas every subdivision vertex is by definition on exactly one direct path.
  Each direct path has some $p \geqslant \ell-1$ subdivision vertices and $p+1$ edges, thus the number of edges of the subdivision is at most~$(1+\frac{1}{\ell-1})n \leqslant (1+\frac{\varepsilon}{2})n$. 
\end{proof}

\end{document}